\DeclareMathAlphabet{\mathpzc}{OT1}{pzc}{m}{it}
\DeclareMathAlphabet{\mathpzc}{OT1}{pzc}{m}{it}
\newcommand{\loopnx}{\Omega^{n}(X,x_0)}
\newcommand{\met}{\text{met}}
\newcommand{\piomet}{\pi_{1}^{\met}}
\newcommand{\pinmet}{\pi_{n}^{\met}}
\newcommand{\pishapex}{\pi_{1}^{\text{sh}}(X,x_0)}
\newcommand{\pinshape}{\pi_{n}^{\text{sh}}}
\newcommand{\pinshapex}{\pi_{n}^{\text{sh}}(X,x_0)}
\newcommand{\pinqtop}{\pi_{n}^{\text{qtop}}}
\newcommand{\pinqtopx}{\pi_{n}^{\text{qtop}}(X,x_0)}
\renewcommand{\span}{\pi_{n}^{\text{Sp}}}
\newcommand{\pinx}{\pi_{n}(X,x_0)}
\newcommand{\shapen}{\check{\pi}_{n}(X,x_0)}
\newcommand{\scru}{\mathscr{U}}
\newcommand{\scrv}{\mathscr{V}}
\newcommand{\bbe}{\mathbb{E}}
\newcommand{\bbn}{\mathbb{N}}
\newcommand{\bbr}{\mathbb{R}}
\newcommand{\bbz}{\mathbb{Z}}
\newcommand{\nerveu}{N(\mathscr{U})}
\newcommand{\nervev}{N(\mathscr{V})}
\newcommand{\ui}{[0,1]}
\newcommand{\cov}{\text{cov}}
\newtheorem{theorem}{Theorem}[section]
\theoremstyle{definition}\newtheorem{definition}[theorem]{Definition}
\newtheorem{lemma}[theorem]{Lemma}
\newtheorem{proposition}[theorem]{Proposition}
\newtheorem{corollary}[theorem]{Corollary}
\newtheorem{example}[theorem]{Example}
\newtheorem{problem}[theorem]{Problem}
\newtheorem{remark}[theorem]{Remark}
\begin{document}

\title{A natural pseudometric on homotopy groups of metric spaces}
\author{Jeremy Brazas and Paul Fabel}

\maketitle

\begin{abstract}
For a path-connected metric space $(X,d)$, the $n$-th homotopy group $\pi_n(X)$ inherits a natural pseudometric from the $n$-th iterated loop space with the uniform metric. This pseudometric gives $\pi_n(X)$ the structure of a topological group and when $X$ is compact, the induced pseudometric topology is independent of the metric $d$. In this paper, we study the properties of this pseudometric and how it relates to previously studied structures on $\pi_n(X)$. Our main result is that the pseudometric topology agrees with the shape topology on $\pi_n(X)$ if $X$ is compact and $LC^{n-1}$ or if $X$ is an inverse limit of finite polyhedra with retraction bonding maps.
\end{abstract}

\section{Introduction}

There are many ways to enrich the $n$-th homotopy group $\pi_n(X,x_0)$ of a based topological space $(X,x_0)$ with geometric or topological structure that remembers local features of the space $X$, which are ``unseen" by the usual group-theoretic structure. For example, the natural quotient topology \cite{BFqtop}, the $\tau$-topology \cite{Braztopgrp}, the Spanier topology \cite{AJMPT,AcetiBrazas}, the whisker topology \cite{AJMPTwhisker}, and variations on these (e.g. coreflections in a convenient category). Of particular interest is the initial topology on $\pi_n(X,x_0)$ with respect to the canonical homomorphism $\Psi_{n}:\pinx\to \shapen$ to the $n$-th shape homotopy group. This topology is often referred to as the \textit{shape topology} and we denote the resulting topological group as $\pinshape(X,x_0)$. The shape topology gives $\pinshape(X,x_0)$ the structure of a pro-discrete group and is closely related to Dugundji's pre-shape-theory approach in \cite{Dug50}. While these topologies, described in more detail in Section \ref{sectiontopologies}, are well-suited for characterizing various properties in the topological setting, they often forget geometric features determined by a choice of metric.

In this paper, we show that a metric space $(X,d)$ with basepoint $x_0\in X$ determines a pseudometric $\rho$ on $\pi_n(X,x_0)$. While the vast majority of papers on topologized homotopy groups focus on the fundamental group, our results hold in arbitrary dimensions. The resulting pseudometric group $(\pi_n(X,x_0),\rho)$ gives the $n$-th homotopy group the structure of a topological group (Proposition \ref{normalsubgroups}), which we denote as $\pinmet(X,x_0)$. For general metric spaces, $\rho$ depends entirely on the given metric $d$ and the topology on $\pinmet(X,x_0)$ induced by $\rho$ may also vary (see Example \ref{puncturedplane}). However, we show that when $X$ is compact, the topology of $\pinmet(X,x_0)$ induced by $\rho$ is independent of the choice of metric $d$  (Theorem \ref{independentthm}). We prove these results in Section \ref{sectionbasictheory} after establishing the basic theory of $\pinmet(X,x_0)$

In Section \ref{sectioncomparewithshape}, we prove the following theorem, which compares the topology of  $\pinmet(X,x_0)$ with the shape topology. Recall that a space $X$ is $LC^{n}$ if for every open neighborhood $U$ of a point $x\in X$, there is an open neighborhood $V$ of $x$ such that $V\subseteq U$ and such that every map $f:S^{k}\to V$ where $0\leq k\leq n$ is null-homotopic in $U$. For all $n\geq 0$, there are compact metric spaces which are $LC^n$ but not locally $n$-connected. 

\begin{theorem}\label{mainthm}
Let $(X,d)$ be a path-connected compact metric space and $n\geq 1$. The topology induced by the natural pseudometric $\rho$ on $\pi_n(X,x_0)$ is at least as fine as the shape topology. Moreover, these two topologies agree in the following two cases:
\begin{enumerate}
\item if $X$ is $LC^{n-1}$,
\item if $X=\varprojlim_{j\in\bbn}(X_j,r_{j+1,j})$ is an inverse limit of finite polyhedra where the bonding maps $r_{j+1,j}:X_{j+1}\to X_j$ are retractions.
\end{enumerate}
\end{theorem}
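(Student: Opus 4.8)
The plan is to compare the two group topologies by comparing their neighborhood bases at the identity $e$. Since $\pinshape(X,x_0)$ carries the initial topology under $\Psi_n\colon\pinx\to\shapen=\varprojlim_j\pi_n(|N(\scru_j)|)$, a basic neighborhood of $e$ in the shape topology is a kernel $\ker(p_{j*})$ of the homomorphism induced by a canonical map $p_j\colon X\to |N(\scru_j)|$ into the nerve of a finite open cover $\scru_j$. On the other side, by Proposition \ref{normalsubgroups} the ball $B_\rho(e,\epsilon)$ is the set of classes admitting a representative that is uniformly within $\epsilon$ of a nullhomotopic map. The two topologies agree precisely when (i) every $\ker(p_{j*})$ contains some $B_\rho(e,\epsilon)$, and (ii) every $B_\rho(e,\epsilon)$ contains some $\ker(p_{j*})$. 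For (i), which is the assertion that $\rho$ is at least as fine as the shape topology, I would fix $\scru_j$, let $\lambda>0$ be a Lebesgue number for $\scru_j$ (available since $X$ is compact), and take $\epsilon<\lambda$. The key lemma is that whenever $f,g\in\loopnx$ satisfy $\sup_t d(f(t),g(t))<\lambda$, the maps $p_j\circ f$ and $p_j\circ g$ are contiguous, hence homotopic, in $|N(\scru_j)|$; applying this to a representative of $[\alpha]\in B_\rho(e,\epsilon)$ together with a nearby nullhomotopic map shows $p_{j*}[\alpha]=e$. This establishes (i) for every compact metric $X$ and is the easy half.

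For the inverse-limit case I would exploit the retractions directly. Because each bonding map $r_{j+1,j}$ is a retraction it splits a compatible inclusion, and composing these yields a continuous section $s_j\colon X_j\to X=\varprojlim X_j$ of the projection $\pi_j$ for which $s_j\circ\pi_j$ fixes the first $j$ coordinates. Using Theorem \ref{independentthm}, I may replace $d$ by a convenient metric weighting the $j$-th coordinate by $2^{-j}$, so that $\sup_t d(\alpha(t),s_j(\pi_j(\alpha(t))))\le 2^{-j}$ for every $\alpha\in\loopnx$. Now if $[\alpha]\in\ker(\pi_{j*})$, then $\pi_j\circ\alpha$ is nullhomotopic in the finite polyhedron $X_j$, so $s_j\circ\pi_j\circ\alpha$ is nullhomotopic in $X$; being uniformly within $2^{-j}$ of $\alpha$, it witnesses $\rho([\alpha],e)\le 2^{-j}$. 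Hence $\ker(\pi_{j*})\subseteq B_\rho(e,2^{-j})$, which gives (ii). Since the projection system $(X_j)$ is a resolution of $X$ and is cofinal with the nerve system, $\ker(\pi_{j*})$ is a shape neighborhood, completing this case.

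For the $LC^{n-1}$ case I would run the same scheme, replacing the section $s_j$ by an approximate lift built from uniform local contractibility. Given $\epsilon$, compactness upgrades $LC^{n-1}$ to its uniform form, producing filling radii $\epsilon=\eta_0>\eta_1>\cdots>\eta_n$ that let me construct, for a sufficiently fine cover $\scru$, a map $\phi$ from the $n$-skeleton $|N(\scru)|^{(n)}$ into $X$: send each vertex $v_U$ to a chosen point of $U$ and extend over the $k$-cells for $1\le k\le n$ using the uniform $LC^{k-1}$ fillings. Choosing the mesh small enough forces $\phi\circ p_\scru$ to be uniformly within $\epsilon$ of $\mathrm{id}_X$. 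For $[\alpha]\in\ker(p_{\scru *})$ the map $\gamma=\phi\circ p_\scru\circ\alpha$ is then uniformly $\epsilon$-close to $\alpha$ (note $p_\scru\circ\alpha$ has image in the $n$-skeleton, so $\gamma$ is defined), and it remains only to see that $\gamma$ is nullhomotopic in $X$, which would give $\rho([\alpha],e)<2\epsilon$ exactly as in the inverse-limit case.

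The main obstacle is this last point, and it is where the hypothesis is genuinely delicate. The lift $\phi$ is defined only on the $n$-skeleton, because filling an $(n+1)$-cell would require $LC^n$; consequently $\gamma$ factors through $|N(\scru)|^{(n)}$ rather than through the full nerve in which $p_\scru\circ\alpha$ is nullhomotopic, and $[\gamma]=\phi_*[p_\scru\circ\alpha]$ need not vanish outright. The class $[p_\scru\circ\alpha]$ lies in the kernel of $\pi_n(|N(\scru)|^{(n)})\to\pi_n(|N(\scru)|)$, hence is a product of conjugates of attaching maps of $(n+1)$-cells, each of which $\phi$ carries to a small-diameter $n$-sphere in $X$. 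The crux is therefore a lemma asserting that, for fine enough covers, $\phi$ can be arranged so that its restriction to the boundary of every $(n+1)$-simplex of $N(\scru)$ is nullhomotopic in $X$ through a set of controlled diameter; granting this, $\phi$ extends over the finitely many $(n+1)$-cells met by a nullhomotopy of $p_\scru\circ\alpha$, and the factorization argument shows $\gamma$ is nullhomotopic, yielding (ii). Extracting this controlled top-dimensional filling from compact $LC^{n-1}$ alone — rather than from the naively required $LC^n$ — is the heart of the argument; I would attack it either by restricting to covers whose nerves have controlled dimension or by a simultaneous skeletal induction that threads the uniform $LC^{n-1}$ fillings coherently across adjacent simplices, and I expect this step to be the principal difficulty.
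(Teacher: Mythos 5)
Your first half and your Case (2) are essentially sound. The Lebesgue-number argument for ``pseudometric at least as fine as shape'' is a workable alternative to the paper's route (which instead equips each finite nerve with a $CAT(1)$ metric to show $\pinmet(|N(\scru)|,U_0)$ is discrete, as in Lemma \ref{discretenesslemma}, and then invokes uniform continuity of $p_{\scru}$ via Proposition \ref{uniformlyinduced}); one repair is needed, since $d(f(t),g(t))<\lambda$ for a Lebesgue number $\lambda$ of $\scru$ only places $f(t)$ and $g(t)$ in a common element of $\scru$, which does not make $p_{\scru}\circ f$ and $p_{\scru}\circ g$ contiguous --- the collection of all cover elements containing $f(t)$ together with all those containing $g(t)$ must have a common point --- so you should take $\lambda$ to be a Lebesgue number of a star-refinement of $\scru$. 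Your Case (2) is, nearly line for line, the paper's Lemma \ref{nestedretractlemma}.

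Case (1), however, contains a genuine gap, and the ``crux lemma'' you propose is the wrong target: it does not follow from $LC^{n-1}$. Requiring $\phi$ restricted to the boundary of each $(n+1)$-simplex to be null-homotopic in a set of controlled diameter is a local $n$-connectedness condition --- it is exactly the step that extends a partial realization over the $(n+1)$-skeleton, which is what $LC^{n}$ provides and $LC^{n-1}$ does not. The earring $\bbe_n$ is compact and $LC^{n-1}$, yet every neighborhood of the wedge point contains essential $n$-spheres of arbitrarily small diameter, so smallness of $\phi|_{\partial\sigma}$ buys nothing, and there is no apparent way to thread the fillings so that all these boundary spheres bound. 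The paper succeeds precisely by not asking for this. The cited Lemma \ref{technical} (\cite[Lemma 5.1]{AcetiBrazas}) carries out the skeletal induction you describe but stops at a weaker conclusion, $\ker(p_{\scrv\#})\leq\span(\scru,x_0)$: the class $[\alpha]$ is exhibited as a finite product of path-conjugates $[\gamma_i\ast f_i]$ with each $f_i$ having image in a single element of $\scru$, and the $f_i$ are \emph{not} claimed to be null-homotopic. Taking $\scru$ to be the cover by $r/2$-balls, each generator satisfies $\rho([\gamma_i\ast f_i],e)\leq\mu(\gamma_i\ast f_i,\gamma_i\ast c)<r$ simply because $f_i$ is uniformly close to a constant map, and the non-Archimedean inequality $\rho(ab,e)\leq\max\{\rho(a,e),\rho(b,e)\}$ of Lemma \ref{multdist} controls the product no matter how many factors appear (Corollary \ref{spanvmetric}). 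This sandwiches the pseudometric topology between the shape and Spanier topologies, and Corollary \ref{spanvshape} collapses the sandwich. In short: you do not need a single null-homotopic map uniformly close to $\alpha$; you only need $[\alpha]$ to be a product of classes each individually $\rho$-close to $e$, which is a strictly weaker and actually attainable goal.
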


To prove this result, we show that for a compact metric space $(X,d)$ the pseudometric topology on $\pi_n(X,x_0)$ lies between the shape topology and a third topology called the \textit{Spanier topology} (Definition \ref{defSpaniertopology}). For Case (1), we apply a result from \cite{AcetiBrazas} to conclude that, in the $LC^{n-1}$ case, the Spanier and shape topologies (and thus all three topologies) agree. We prove Case (2) separately using the fact that we may alter the metric on $X$ without affecting the topologies of $\pinmet(X,x_0)$ or $\pinshape(X,x_0)$.

While Case (2) may appear structurally restrictive, it provides insight to some prominent examples. For example, if $X$ is an infinite shrinking wedge of finite polyhedra $X_j$, then $X$ my be identified with $\varprojlim_{k\in\bbn}\bigvee_{j=1}^{k}X_j$, which has retraction bonding maps (see Example \ref{shrinkingwedge}). While the higher homotopy groups of spaces of the form $X$ are only known in some special cases \cite{EK00higher}, Theorem \ref{maincor} implies that $\pinmet(X,x_0)\cong \pinshape(X,x_0)$ for all $n\geq 1$.

The upshot of Case (1) of Theorem \ref{mainthm} is that in the presence of local $(n-1)$-connectedness conditions, one can often characterize the pseudometric topology by appealing to shape-theoretic methods, which are likely to be simpler than an analysis of the uniform metric on the $n$-th loop space. We can modify an example from \cite{CMRZZ08} to show Theorem \ref{mainthm} does not hold for $n\geq 1$ without the $LC^{0}$ condition (see Example \ref{cylinderexample}). The authors do not know of a higher dimensional counterexample, that is, a Peano continuum $X$ and $n\geq 2$ for which the pseudometric topology on $\pi_n(X,x_0)$ is strictly finer than the shape topology. According to Theorem \ref{mainthm} such an example must fail to be $LC^{n-1}$ and also cannot be an inverse limit of retracts of finite polyhedra. This leaves the following problem.

\begin{problem}\label{problem}
For $n\geq 2$, give an example of a based Peano continuum $(X,x_0)$ for which the topology of $\pi_{n}^{\text{met}}(X,x_0)$ is strictly finer than that of $\pi_{n}^{\text{sh}}(X,x_0)$.
\end{problem}

\section{Preliminaries}\label{sectionprelim}

Throughout this paper, $(X,d)$ will denote a path-connected metric space with basepoint $x_0\in X$. The unit interval is denoted $\ui$, the unit $n$-disk is denoted $D^n=\{\mathbf{x}\in\bbr^n\mid \|\mathbf{x}\|\leq 1\}$ and $S^{n-1}=\partial D^n$ is the unit $(n-1)$-sphere. The latter two spaces have basepoint $d_0=(1,0,\dots ,0)$. The $n$-th homotopy group of $(X,x_0)$ is denoted $\pi_n(X,x_0)$. If $f:(X,x_0)\to (Y,y_0)$ is a based map, then $f_{\#}:\pi_n(X,x_0)\to \pi_n(Y,y_0)$ is the induced homomorphism.

Let $\Omega^n(X,x_0)$ be the space of maps $\alpha:(\ui^n,\partial \ui^n)\to (X,x_0)$ based at $x_0$ (which we call \textit{$n$-loops}) with the metric of uniform convergence. It is well-known that the uniform metric topology agrees with the usual compact-open topology on $\loopnx$ (when convenient, we may identify $\Omega^n(X,x_0)$ with the space of based maps $(S^n,d_0)\to (X,x_0)$ also with the uniform metric). Let $\pi:\loopnx\to\pi_n(X,x_0)$, $\pi(\alpha)=[\alpha]$ denote the canonical surjection taking a map $\alpha$ to its homotopy class. 

Given $\alpha\in\Omega^n(X,x_0)$, let $\alpha^{-}(t_1,t_2,\dots,t_n)=\alpha(1-t_1,t_2,\dots,t_n)$ denote the reverse of $\alpha$ and if $\alpha_1,\alpha_2,\dots,\alpha_n$ is a sequence of $n$-loops, then $\alpha_1\cdot \alpha_2\cdots \alpha_n$ is the usual $n$-fold concatenation defined as $\alpha_i$ on $\left[\frac{i-1}{n},\frac{i}{n}\right]\times \ui^{n-1}$. Generally, $c_{x_0}\in \Omega^n(X,x_0)$ will denote the constant map at $x_0$ so that $e=[c_{x_0}]$ serves as the identity element of $\pi_n(X,x_0)$. 


\section{Topologies on homotopy groups}\label{sectiontopologies}

Here, we briefly recall some previously studied topologies on the homotopy groups.

\subsection{The quotient topology}

Let $\pinqtopx$ denote the $n$-th homotopy group with the quotient topology with respect to the canonical map $\pi:\loopnx\to\pi_n(X,x_0)$, that is, the finest topology on $\pi_n(X,x_0)$ such that $\pi$ is continuous. In particular $A\subseteq \pinqtopx$ is open (closed) if and only if $\pi^{-1}(A)$ is open (closed) in $\loopnx$. It is known that this topology gives $\pi_n(X,x_0)$ the structure of a \textit{quasitopological group} \cite{AT08} (in the sense that inversion is continuous and multiplication is continuous in each variable) which can fail to be a topological group \cite{Brazfretopgrp,Fabel11HE,Fabcgqtop} even when $X$ is a compact metric space. For a general study of $\pi_1$ with the quotient topology, we refer the reader to \cite{BFqtop}. It is known that if $X$ is locally $(n-1)$-connected and semilocally $n$-connected, then $\Omega^n(X,x_0)$ is locally path-connected \cite{Wada}. In such a situation, $\pinqtop(X,x_0)$ is guaranteed to be discrete (see also \cite{Calcut} in dimension $n=1$). In particular, if $X$ is a polyhedron, manifold, or CW-complex, then $\pinqtop(X,x_0)$ is a discrete group for all $n\geq 1$.

\subsection{The ``tau topology"}

In \cite{Braztopgrp}, it was observed that for any quasitopological group $G$, there is a finest group topology on the group $G$, which is coarser than that of $G$. The resulting topological group is denoted $\tau(G)$. In other words, the category of topological groups is a reflective subcategory of the category of quasitopological groups, where $\tau$ is the reflection functor. In the case of homotopy groups, the $\tau$-reflection $\pi_{n}^{\tau}(X,x_0)=\tau(\pinqtopx)$ is a topological group. The topology of $\pi_{n}^{\tau}(X,x_0)$ is coarser than that of $\pinqtopx$ and agrees with that of $\pinqtopx$ if and only if $\pinqtopx$ is a topological group. The $\tau$-topology is the finest \textit{group} topology on $\pinx$ for which $\pi:\loopnx\to\pinx$ is continuous. In \cite{VZtau}, an analogous construction is given for the ``universal path space," i.e. the set of path-homotopy classes of paths in $X$ starting at $x_0$.

\subsection{The shape topology}

We give a few details regarding the construction of the $n$-th shape homotopy group and refer the reader to \cite{MS82} for a detailed treatment of shape theory. Let $\cov(X)$ be the directed set of pairs $(\scru,U_0)$ where $\scru$ is a locally finite open cover of $X$ and $U_0$ is a distinguished element of $\scru$ containing $x_0$. Here, $\cov(X)$ is directed by refinement. Given $(\scru,U_0)\in \cov(X)$ let $N(\scru)$ be the abstract simplicial complex which is the nerve of $\scru$. In particular, $\scru$ is the vertex set of $U$ and the n vertices $U_1,\dots,U_n$ span an n-simplex $\Leftrightarrow$ $\bigcap_{i=1}^{n}U_i\neq \emptyset$. The geometric realization $|\nerveu|$ is a polyhedron and thus $\pi_n(|\nerveu|,U_0)$ may be regarded naturally as a discrete group, e.g. if it is given the quotient topology.

Given a pair $(\scrv,V_0)$ which refines $(\scru,U_0)$, a simplicial map $p_{\scru\scrv}:|\nervev|\to |\nerveu|$ is constructed by sending a vertex $V\in \scrv$ to some $U\in\scru$ for which $V\subseteq U$ (in particular, $V_0$ is mapped to $U_0$) and extending linearly. The map $p_{\scru\scrv}$ is unique up to homotopy and thus induces a unique homomorphism $p_{\scru\scrv\#}:\pi_n(|\nervev|,V_0)\to \pi_n(|\nerveu|,U_0)$. The inverse system \[(\pi_n(|\nerveu|,U_0),p_{\scru\scrv\#},\cov(X))\] of discrete groups is the \textit{$n$th pro-homotopy group} and the limit $\check{\pi}_n(X,x_0)$ (topologized with the usual inverse limit topology) is the \textit{n-th shape homotopy group}.

Given a partition of unity $\{\phi_{U}\}_{U\in \mathscr{U}}$ subordinated to $\scru$ and such that $\phi_{U_0}(x_0)=1$, a map $p_{\scru}:X\to |\nerveu|$ is constructed by taking $\phi_{U}(x)$ (for $x\in U$, $U\in \scru$) to be the barycentric coordinate of $p_{\mathscr{U}}(x)$ corresponding to the vertex $U$. The induced continuous homomorphism $p_{\scru\#}:\pi_n(X,x_0)\to \pi_{n}(|\nerveu|,U_0)$ satisfies $p_{\scru\#}\circ p_{\scru\scrv\#}=p_{\scrv\#}$ whenever $(\scrv,V_0)$ refines $(\scru,U_0)$. Thus there is a canonical, continuous homomorphism $\Psi_n:\pi_n(X,x_0)\to \shapen$ to the $n$-th shape homotopy group, given by $\Psi_n([\alpha])=([p_{\scru}\circ \alpha])_{\scru}$.

\begin{definition}
The \textit{shape topology} on $\pi_n(X,x_0)$ is the initial topology with respect to the $n$-th shape homomorphism $\Psi_n:\pi_n(X,x_0)\to \shapen$ described above. Let $\pinshape(X,x_0)$ denote $\pi_n(X,x_0)$ equipped with the shape topology.
\end{definition}

The shape topology is characterized as follows: $A\subset \pinshape(X,x_0)$ is open (resp. closed) $\Leftrightarrow$ $A=\Psi_{n}^{-1}(B)$ for an open (resp. closed) set $B\subset \shapen$. Equivalently, a neighborhood base at the identity element is formed by the normal subgroups $\ker (p_{\scru\#}:\pi_n(X,x_0)\to \pi_n(|N(\scru)|,U_0))$, $(\scru,U_0)\in \cov(X)$. Since $\shapen$ is a topological group, it follows immediately that $\pinshape(X,x_0)$ is a topological group. 
\begin{proposition} \cite[3.24]{Braztopgrp}
For any space $X$, the shape topology of $\pinshapex$ is coarser than that of $\pi_{n}^{\tau}(X,x_0)$.
\end{proposition}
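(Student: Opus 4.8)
The plan is to exploit the maximal characterization of the $\tau$-topology recorded just above the statement: $\pi_{n}^{\tau}(X,x_0)$ carries the finest \emph{group} topology on $\pi_n(X,x_0)$ for which the canonical surjection $\pi:\loopnx\to\pi_n(X,x_0)$ is continuous. Consequently, to prove that the shape topology is coarser than the $\tau$-topology, it suffices to show that the shape topology lies in this very class, i.e.\ that it is (i) a group topology and (ii) a topology for which $\pi$ is continuous. Part (i) has already been observed (the shape topology makes $\pinshape(X,x_0)$ a topological group), so the entire content of the proof reduces to establishing (ii); the conclusion is then immediate from the finest-topology property.

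For (ii), I would first reduce to a statement about the nerve projections. Since the shape topology is the initial topology with respect to $\Psi_n:\pi_n(X,x_0)\to\shapen$, and $\shapen$ carries the inverse limit topology, which is itself initial with respect to the projections onto the discrete groups $\pi_n(|\nerveu|,U_0)$, transitivity of initial topologies shows that the shape topology is the initial topology with respect to the family $\{p_{\scru\#}\}_{(\scru,U_0)\in\cov(X)}$. By the universal property of initial topologies, $\pi:\loopnx\to\pinshape(X,x_0)$ is continuous if and only if every composite $p_{\scru\#}\circ\pi:\loopnx\to\pi_n(|\nerveu|,U_0)$ is continuous. So it remains to verify the continuity of each such composite.

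The key step is the identity $p_{\scru\#}(\pi(\alpha))=[p_{\scru}\circ\alpha]$, which factors $p_{\scru\#}\circ\pi$ as the post-composition map $\alpha\mapsto p_{\scru}\circ\alpha$ from $\loopnx$ into $\Omega^n(|\nerveu|,U_0)$, followed by the canonical surjection to $\pi_n(|\nerveu|,U_0)$. Post-composition with the fixed continuous map $p_{\scru}$ is continuous for the uniform (equivalently, compact-open) metrics on the loop spaces, and since $|\nerveu|$ is a polyhedron, its homotopy group is discrete in the quotient topology $\pinqtop(|\nerveu|,U_0)$; hence the surjection is continuous as a map into this discrete group. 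Composing two continuous maps yields continuity of $p_{\scru\#}\circ\pi$, and as this holds for every $(\scru,U_0)\in\cov(X)$, the map $\pi$ is continuous into $\pinshape(X,x_0)$.

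The only genuine subtlety, and the place I would be most careful, is this last step: it is exactly the continuity of post-composition together with the discreteness of the nerve's homotopy group that forces $p_{\scru\#}\circ\pi$ to be continuous even though $\pi$ itself is merely a topological quotient map. Once (ii) is secured, the proof closes without further work, for the shape topology is then a group topology making $\pi$ continuous, and so by the definition of $\pi_{n}^{\tau}(X,x_0)$ as the finest such topology, the shape topology is coarser than that of $\pi_{n}^{\tau}(X,x_0)$.
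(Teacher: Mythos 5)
Your argument is correct: reducing the claim to showing that the shape topology is a group topology in which $\pi$ is continuous, and then verifying continuity of each $p_{\scru\#}\circ\pi$ via post-composition with $p_{\scru}$ plus discreteness of $\pinqtop(|\nerveu|,U_0)$, is exactly the right use of the maximality property of the $\tau$-topology. The paper itself offers no proof here (it simply cites \cite[3.24]{Braztopgrp}), and your reconstruction is the standard argument behind that citation; the only cosmetic point is that $|\nerveu|$ need not be metrizable, so one should phrase the post-composition step purely in terms of the compact-open topology, as your parenthetical already allows.
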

%
%
\begin{definition}
We say a space $X$ is $\pi_n$\textit{-shape injective} if $\Psi_n:\pinx\to \shapen$ is a monomorphism.
\end{definition}
By construction, $\pinshapex$ is Hausdorff $\Leftrightarrow$ $X$ is $\pi_n$-shape injective.
\section{A pseudometric on homotopy groups}\label{sectionbasictheory}

Let $(X,d)$ be a path-connected metric space and consider the uniform metric \[\mu(\alpha,\beta)=\sup_{t\in \ui^n}\{d(\alpha(t),\beta(t))\}\] on $\Omega^n(X,x_0)$. Observe that \[\mu(\alpha\cdot \alpha ',\beta\cdot \beta ')=\max\{\mu(\alpha,\beta),\mu(\alpha ',\beta ')\}\] and $\mu(\alpha,\beta)=\mu(\alpha^{-},\beta^{-})$. We consider the following function $\rho:\pinx\times\pinx\to [0,\infty)$ on the homotopy group $\pinx$: \[\rho(a,b)=\inf\{\mu(\alpha,\beta)\mid\alpha\in a,\beta\in b\}.\] We will show below that $\rho$ is a pseudometric on $\pinx$. Certainly $\rho$ is symmetric and $\rho(a,a)=0$; however, a little more work is required to verify the triangle inequality. 

\begin{remark}
In general, if $(X,d)$ is a metric space, $\sim$ is an equivalence relation, and $Y=X/\mathord{\sim}$, then the definition $\rho:Y\times Y\to [0,\infty)$, $\rho(a,b)=\inf\{d(\alpha,\beta)\mid \alpha\in a,\beta\in b\}$ need not satisfy the triangle inequality. In our situation, we must make use of the group structure of $\pinx$ and the nature of the uniform metric $\mu$ in order to verify the triangle inequality.
\end{remark}

In the next three results, we assume $(X,d)$ is an arbitrary metric space, $x_0\in X$, and $\mu$ and $\rho$ are defined as above.

\begin{lemma}[Isometric Inversion]
For all $a,b\in \pinx$, we have $\rho(a,b)=\rho(a^{-1},b^{-1})$.
\end{lemma}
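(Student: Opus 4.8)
The plan is to exploit the defining relation between inversion in the homotopy group and the reversal operation on representing $n$-loops. The key elementary observations, already recorded just above the statement, are that $\mu(\alpha,\beta)=\mu(\alpha^{-},\beta^{-})$ and that $[\alpha^{-}]=[\alpha]^{-1}$ in $\pinx$. The reversal map $\alpha\mapsto\alpha^{-}$ is a bijection of $\loopnx$ onto itself that respects the quotient $\pi:\loopnx\to\pinx$ in the sense that it descends to inversion. So the whole proof should be a bijection-of-representatives argument: reversal sets up a metric-preserving correspondence between the sets of representatives being minimized over.

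First I would fix $a,b\in\pinx$ and unwind the definition of $\rho(a^{-1},b^{-1})$ as the infimum of $\mu(\gamma,\delta)$ over all $\gamma\in a^{-1}$ and $\delta\in b^{-1}$. Next I would observe that as $\alpha$ ranges over all representatives of $a$, the reversed loop $\alpha^{-}$ ranges over \emph{all} representatives of $a^{-1}$: indeed $[\alpha^{-}]=[\alpha]^{-1}=a^{-1}$ shows $\alpha^{-}\in a^{-1}$, and surjectivity follows because reversal is an involution ($(\alpha^{-})^{-}=\alpha$), so any $\gamma\in a^{-1}$ is $(\gamma^{-})^{-}$ with $\gamma^{-}\in a$. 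The same statement holds for $b$ and $b^{-1}$. Then I would substitute $\gamma=\alpha^{-}$, $\delta=\beta^{-}$ and use $\mu(\alpha^{-},\beta^{-})=\mu(\alpha,\beta)$ to rewrite the infimum defining $\rho(a^{-1},b^{-1})$ as the infimum of $\mu(\alpha,\beta)$ over $\alpha\in a$, $\beta\in b$, which is exactly $\rho(a,b)$.

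Concretely, the argument reduces to the chain
\begin{align*}
\rho(a^{-1},b^{-1})
&=\inf\{\mu(\gamma,\delta)\mid \gamma\in a^{-1},\ \delta\in b^{-1}\}\\
&=\inf\{\mu(\alpha^{-},\beta^{-})\mid \alpha\in a,\ \beta\in b\}\\
&=\inf\{\mu(\alpha,\beta)\mid \alpha\in a,\ \beta\in b\}=\rho(a,b),
\end{align*}
where the second equality is the reindexing by the reversal bijection and the third is isometric invariance of $\mu$ under reversal. I do not anticipate a genuine obstacle here; the only point needing care is the middle step, namely confirming that reversal is an \emph{honest bijection} between the representative sets of $a$ and $a^{-1}$ (not merely a map into them), which is what legitimizes replacing one infimum by the other rather than only obtaining an inequality. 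Since reversal is its own inverse and is defined pointwise on all of $\loopnx$, this bijectivity is immediate, so the two infima are taken over corresponding sets and must coincide. This lemma is really the pattern for the later triangle-inequality argument, where the group structure will do more substantial work.
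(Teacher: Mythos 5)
Your proof is correct and follows essentially the same route as the paper's: both rest on the identity $\mu(\alpha,\beta)=\mu(\alpha^{-},\beta^{-})$ together with the fact that reversal descends to inversion on homotopy classes. You merely spell out the bijection-of-representatives step (reversal being an involution) that the paper's one-line proof leaves implicit.
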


\begin{proof}
Since $\mu(\alpha,\beta)=\mu(\alpha^{-},\beta^{-})$ for all $\alpha\in a$, $\beta\in b$, it is clear that \[\rho(a,b)=\rho([\alpha^{-}],[\beta^{-}])=\rho(a^{-1},b^{-1}).\]
\end{proof}

\begin{lemma}[Isometric translations]\label{translations}
For all $a,b,c\in \pinx$, we have $\rho(a,b)=\rho(ac,bc)=\rho(ca,cb)$.
\end{lemma}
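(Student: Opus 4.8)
The plan is to prove that $\rho$ is invariant under right and left translation by an element $c \in \pinx$. The key observation is that the uniform metric $\mu$ behaves multiplicatively with respect to concatenation, namely $\mu(\alpha\cdot\alpha', \beta\cdot\beta') = \max\{\mu(\alpha,\beta), \mu(\alpha',\beta')\}$, as recorded in the setup. I would first establish right-invariance $\rho(a,b) = \rho(ac,bc)$ and then deduce left-invariance either by a symmetric argument or by combining right-invariance with the Isometric Inversion Lemma.

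For right-invariance, the plan is to prove the two inequalities $\rho(ac,bc) \leq \rho(a,b)$ and $\rho(a,b) \leq \rho(ac,bc)$ separately. For the first, fix any representatives $\alpha \in a$, $\beta \in b$, and a fixed representative $\gamma \in c$. Then $\alpha\cdot\gamma \in ac$ and $\beta\cdot\gamma \in bc$, so by the definition of $\rho$ as an infimum we have
\[
\rho(ac,bc) \leq \mu(\alpha\cdot\gamma, \beta\cdot\gamma) = \max\{\mu(\alpha,\beta), \mu(\gamma,\gamma)\} = \mu(\alpha,\beta).
\]
Taking the infimum over all $\alpha \in a$, $\beta \in b$ yields $\rho(ac,bc) \leq \rho(a,b)$. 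The reverse inequality follows by the same argument applied to $ac, bc$ and the element $c^{-1}$: since $(ac)c^{-1} = a$ and $(bc)c^{-1} = b$, the first inequality gives $\rho(a,b) = \rho((ac)c^{-1}, (bc)c^{-1}) \leq \rho(ac,bc)$. Combining the two inequalities gives $\rho(a,b) = \rho(ac,bc)$.

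Having established right-invariance, the plan is to obtain left-invariance $\rho(a,b) = \rho(ca,cb)$ by applying the Isometric Inversion Lemma together with right-invariance. Specifically, $\rho(ca,cb) = \rho((ca)^{-1}, (cb)^{-1}) = \rho(a^{-1}c^{-1}, b^{-1}c^{-1})$, and by right-invariance with the element $c^{-1}$ this equals $\rho(a^{-1}, b^{-1})$, which by Isometric Inversion again equals $\rho(a,b)$. This chains the two lemmas cleanly without redoing the concatenation estimate.

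I do not anticipate a genuine obstacle here, since the multiplicative property of $\mu$ does all the work and the group structure supplies the inverse trick needed for the reverse inequality. The one point requiring minor care is that the infimum defining $\rho$ ranges over all representatives, so one must be careful to fix a single representative $\gamma$ of $c$ when forming the concatenations and then take the infimum only over representatives of $a$ and $b$; attempting to optimize over $\gamma$ simultaneously is unnecessary because the contribution $\mu(\gamma,\gamma) = 0$ is already annihilated by the max.
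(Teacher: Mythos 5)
Your proof is correct and follows essentially the same route as the paper: both rest on the identity $\mu(\alpha\cdot\gamma,\beta\cdot\gamma)=\mu(\alpha,\beta)$ and then pass to infima. You are in fact slightly more careful than the paper's one-line deduction, since you supply the reverse inequality via translation by $c^{-1}$ and obtain left-invariance from inversion rather than a symmetric concatenation argument, but these are cosmetic differences.
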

\begin{proof}
Fix $a,b,c\in \pinx$ and $\alpha\in a$, $\beta\in b$, and $\gamma\in c$. We have $\mu(\alpha\cdot \gamma,\beta\cdot \gamma)=\mu(\alpha,\beta)$. It follows that \[\rho(a,b)=\rho([\alpha\cdot \gamma],[\beta\cdot \gamma])=\rho(ac,bc).\] The symmetric argument gives $\rho(a,b)=\rho(ca,cb)$.
\end{proof}
\begin{lemma}\label{multdist}
For all $a,b\in \pinx$, we have $\rho(ab,e)\leq \max\{\rho(a,e),\rho(b,e)\}$
\end{lemma}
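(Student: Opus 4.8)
The plan is to exploit the two algebraic features of the uniform metric recorded just before the statement, chiefly the concatenation identity $\mu(\alpha\cdot\alpha',\beta\cdot\beta')=\max\{\mu(\alpha,\beta),\mu(\alpha',\beta')\}$, together with the fact that the constant $n$-loop $c_{x_0}$ is a fixed point for concatenation, $c_{x_0}\cdot c_{x_0}=c_{x_0}$.

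First I would fix representatives: choose any $\alpha\in a$ and $\gamma\in b$. Since the product in $\pinx$ is induced by concatenation, $\alpha\cdot\gamma$ represents $ab$, while $c_{x_0}=c_{x_0}\cdot c_{x_0}$ represents $e$. Applying the concatenation identity to the pair $(\alpha\cdot\gamma,\,c_{x_0}\cdot c_{x_0})$ gives
\[
\mu(\alpha\cdot\gamma,\,c_{x_0})=\max\{\mu(\alpha,c_{x_0}),\,\mu(\gamma,c_{x_0})\}.
\]
Because $\rho(ab,e)$ is an infimum over all representatives, it is bounded above by the left-hand side, so for every choice of $\alpha\in a$ and $\gamma\in b$ we obtain
\[
\rho(ab,e)\le \max\{\mu(\alpha,c_{x_0}),\,\mu(\gamma,c_{x_0})\}.
\]

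The final step is to take the infimum on the right over $\alpha$ and $\gamma$ \emph{independently}. Here I would invoke the elementary fact that for functions of two decoupled variables, $\inf_{\alpha,\gamma}\max\{f(\alpha),g(\gamma)\}=\max\{\inf_\alpha f(\alpha),\inf_\gamma g(\gamma)\}$: the inequality $\ge$ is immediate since $\max\{f,g\}\ge f$ and $\max\{f,g\}\ge g$, and for $\le$ one chooses $\alpha,\gamma$ driving $f$ and $g$ within $\varepsilon$ of their respective infima simultaneously, which is possible precisely because the two variables range over the separate classes $a$ and $b$. With $f(\alpha)=\mu(\alpha,c_{x_0})$ and $g(\gamma)=\mu(\gamma,c_{x_0})$, the two infima are exactly $\rho(a,e)$ and $\rho(b,e)$, yielding $\rho(ab,e)\le\max\{\rho(a,e),\rho(b,e)\}$.

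The only real subtlety is this last interchange of $\inf$ and $\max$; it is what forces the argument to use the single constant loop $c_{x_0}$ as the common representative of $e$ on both halves of the concatenation, so that the two infima genuinely separate. I expect no further obstacle, since the concatenation identity does all the geometric work and no homotopies beyond the standard identification of $[\alpha\cdot\gamma]$ with the product class $ab$ are required.
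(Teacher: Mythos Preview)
There is a genuine gap in the final step. You assert that $\inf_{\alpha\in a}\mu(\alpha,c_{x_0})=\rho(a,e)$, but the definition of $\rho$ is $\rho(a,e)=\inf\{\mu(\alpha,\beta):\alpha\in a,\ \beta\in e\}$, an infimum over \emph{all} null-homotopic $\beta$, not just the constant loop. Fixing $\beta=c_{x_0}$ gives only the inequality $\rho(a,e)\leq \inf_{\alpha\in a}\mu(\alpha,c_{x_0})$, and this can be strict. For a concrete instance, take $X=S^1\vee[0,L]$ (a circle of diameter $\pi$ with an arc of length $L>0$ attached) and put the basepoint $x_0$ at the free end of the arc. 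Every representative $\alpha$ of the generator $a\in\pi_1(X,x_0)$ must reach the point of the circle antipodal to the wedge point, so $\mu(\alpha,c_{x_0})\geq L+\pi$. On the other hand, if $\beta$ is the null-homotopic loop that accompanies $\alpha$ along the arc and waits at the wedge point while $\alpha$ traverses the circle, then $\mu(\alpha,\beta)=\pi$, giving $\rho(a,e)\leq \pi<L+\pi$. Thus your bound $\rho(ab,e)\leq\max\{\inf_\alpha\mu(\alpha,c_{x_0}),\inf_\gamma\mu(\gamma,c_{x_0})\}$ is correct but does not imply the lemma.

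The fix, which is exactly what the paper does, is to let the representative of $e$ vary on each half of the concatenation: choose $\gamma_1,\gamma_2\in e$ (arbitrary null-homotopic $n$-loops) with $\mu(\alpha,\gamma_1)<\rho(a,e)+\epsilon$ and $\mu(\beta,\gamma_2)<\rho(b,e)+\epsilon$. Then $\gamma_1\cdot\gamma_2\in e$ as well, so
\[
\rho(ab,e)\leq \mu(\alpha\cdot\beta,\gamma_1\cdot\gamma_2)=\max\{\mu(\alpha,\gamma_1),\mu(\beta,\gamma_2)\}<\max\{\rho(a,e),\rho(b,e)\}+\epsilon.
\]
Your ``decoupling'' intuition is right; you just need to decouple over the pairs $(\alpha,\gamma_1)\in a\times e$ and $(\beta,\gamma_2)\in b\times e$ rather than pin the $e$-side to $c_{x_0}$.
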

\begin{proof}
Suppose $a,b\in \pinx$ and $\epsilon>0$. Find $\alpha\in a$, $\beta\in b$, and $\gamma_1,\gamma_2\in e=[c_{x_0}]$ such that $\mu(\alpha,\gamma_1)<\rho(a,e)+\frac{\epsilon}{2}$ and $\mu(\beta,\gamma_2)<\rho(b,e)+\frac{\epsilon}{2}$. Note that
\begin{eqnarray*}
\mu(\alpha\cdot \beta,\gamma_1\cdot \gamma_2) &=& \max\{\mu(\alpha,\gamma_1),\mu(\beta,\gamma_2)\}\\
&<& \max\left\{\rho(a,e)+\frac{\epsilon}{2},\rho(b,e)+\frac{\epsilon}{2}\right\}\\
&< &\max\left\{\rho(a,e),\rho(b,e)\right\}+\epsilon
\end{eqnarray*}
Thus $\rho(ab,e)\leq \max\left\{\rho(a,e),\rho(b,e)\right\}$.
\end{proof}
\begin{theorem}
For any metric space $(X,d)$, the function $\rho:\pinx\times\pinx\to [0,\infty)$, defined above, is a pseudometric on $\pinx$.
\end{theorem}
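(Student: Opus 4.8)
The plan is to establish that $\rho$ is a pseudometric by verifying the three defining properties. Symmetry and $\rho(a,a)=0$ are already noted in the text (the former because $\mu$ is symmetric, the latter since we may take $\alpha=\beta$ to make $\mu(\alpha,\beta)=0$), so the entire content of the theorem reduces to the triangle inequality
\[
\rho(a,c)\leq \rho(a,b)+\rho(b,c)\quad\text{for all }a,b,c\in\pinx.
\]
As the Remark emphasizes, this is exactly the step that fails for a generic quotient of a metric space, so it is the main obstacle and the reason the three preceding lemmas were isolated.

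First I would reduce the general triangle inequality to a statement about distances from the identity $e$, using the Isometric Translations lemma (Lemma \ref{translations}). Since $\rho$ is translation-invariant on both sides, I can translate by $c^{-1}$ (or $b^{-1}$) to rewrite the desired inequality in terms of the ``norm'' $a\mapsto\rho(a,e)$. Concretely, applying the lemma gives $\rho(a,c)=\rho(ac^{-1},e)$, $\rho(a,b)=\rho(ab^{-1},e)$, and $\rho(b,c)=\rho(bc^{-1},e)$. Setting $x=ab^{-1}$ and $y=bc^{-1}$, so that $xy=ac^{-1}$, the triangle inequality becomes
\[
\rho(xy,e)\leq \rho(x,e)+\rho(y,e).
\]
This is the key algebraic reduction: translation-invariance lets me replace a genuinely three-point condition with a subadditivity statement for a single norm.

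Next I would prove this subadditivity. Here Lemma \ref{multdist} already supplies the stronger \emph{ultrametric-type} bound $\rho(xy,e)\leq\max\{\rho(x,e),\rho(y,e)\}$, and since $\max\{s,t\}\leq s+t$ for nonnegative reals $s,t$, the needed inequality $\rho(xy,e)\leq\rho(x,e)+\rho(y,e)$ follows immediately. (In fact Lemma \ref{multdist} shows $\rho$ is an invariant ultrapseudometric, which is a stronger fact, but for the pseudometric axioms the additive bound is all I need.) Unwinding the substitution via Lemma \ref{translations} then recovers $\rho(a,c)\leq\rho(a,b)+\rho(b,c)$, completing the verification. Thus every hard part has been front-loaded into the three lemmas, and the theorem's proof is a short assembly: symmetry and reflexivity are immediate, and the triangle inequality is obtained by combining isometric translation invariance with the subadditivity of the norm from Lemma \ref{multdist}.
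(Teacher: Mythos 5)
Your proposal is correct and follows essentially the same route as the paper: reduce to the identity via the translation lemma, factor $ac^{-1}=(ab^{-1})(bc^{-1})$, apply the $\max$-bound of Lemma \ref{multdist}, and use $\max\{s,t\}\leq s+t$. The only (cosmetic) difference is that you write $\rho(b,c)=\rho(bc^{-1},e)$ directly, whereas the paper passes through $\rho(cb^{-1},e)$ using isometric inversion and symmetry.
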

\begin{proof} As noted above, it suffices to verify the triangle inequality. Let $a,b,c\in \pinx$. Using the previous three lemmas, we have:
\begin{eqnarray*}
\rho(a,c) &=& \rho(ac^{-1},e) \\
&=& \rho(ab^{-1}bc,e) \\
&\leq &\max\{\rho(ab^{-1},e),\rho(bc^{-1},e)\} \\
&= &\max\{\rho(ab^{-1},e),\rho(cb^{-1},e)\} \\
&\leq & \rho(ab^{-1},e)+\rho(cb^{-1},e)\\
&=& \rho(a,b)+\rho(c,b) \\
 &=& \rho(a,b)+\rho(b,c)
\end{eqnarray*}
\end{proof}
\begin{proposition}\label{normalsubgroups}
Let $(X,d)$ be any metric space and $x_0\in X$. Equipped with the topology induced by the pseudometric $\rho$, $\pinx$ is a topological group whose open balls $B_{\rho}(e,r)=\{a\in \pi_n(X,x_0)\mid \rho(e,a)<r\}$, $r>0$ are open normal subgroups. 
\end{proposition}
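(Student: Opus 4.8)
The plan is to exploit the three structural properties of $\rho$ already established---the isometric inversion property, the bi-invariance of Lemma~\ref{translations}, and the ultrametric-type estimate of Lemma~\ref{multdist}---and to proceed in two stages: first verify that each ball $B_{\rho}(e,r)$ is an open normal subgroup, and then deduce that multiplication and inversion are continuous. The conceptual point worth emphasizing is that the subgroup property is \emph{not} a formal consequence of $\rho$ merely being a bi-invariant pseudometric; open balls in a general metric group need not be subgroups. It is precisely the strengthened inequality $\rho(ab,e)\le\max\{\rho(a,e),\rho(b,e)\}$ of Lemma~\ref{multdist} that forces closure under multiplication.

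First I would show $B_{\rho}(e,r)$ is a subgroup. Since $\rho(e,e)=0<r$, the identity lies in the ball. If $\rho(a,e)<r$, then the isometric inversion property together with $e^{-1}=e$ gives $\rho(a^{-1},e)=\rho(a^{-1},e^{-1})=\rho(a,e)<r$, so the ball is closed under inverses. If $\rho(a,e)<r$ and $\rho(b,e)<r$, then Lemma~\ref{multdist} yields $\rho(ab,e)\le\max\{\rho(a,e),\rho(b,e)\}<r$, so the ball is closed under products. As an open ball in the pseudometric topology, $B_{\rho}(e,r)$ is open by definition. Normality follows from a short bi-invariance computation: given $g\in\pinx$ and $a\in B_{\rho}(e,r)$, two applications of Lemma~\ref{translations}---right translation by $g$ followed by left translation by $g^{-1}$---give
\[
\rho(gag^{-1},e)=\rho(ga,g)=\rho(a,e)<r,
\]
so $gag^{-1}\in B_{\rho}(e,r)$ and the subgroup is normal.

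Finally, the topological group axioms follow from bi-invariance. For multiplication, the triangle inequality and Lemma~\ref{translations} give
\[
\rho(a'b',ab)\le\rho(a'b',a'b)+\rho(a'b,ab)=\rho(b',b)+\rho(a',a),
\]
so if $a'$ and $b'$ lie within $\tfrac{\epsilon}{2}$ of $a$ and $b$ respectively, then $a'b'$ lies within $\epsilon$ of $ab$; this is exactly continuity of multiplication at the point $(a,b)$ with respect to the product topology. For inversion, the isometric inversion property shows $a\mapsto a^{-1}$ is distance-preserving, hence continuous.

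I expect no genuine obstacle here, since once Lemma~\ref{multdist} is in hand the whole argument reduces to a sequence of direct estimates; the substantive work was already carried out in proving that strengthened inequality. The only point requiring mild care is the distinction between the pseudometric topology, which may fail to be Hausdorff, and the genuinely metric case---but this distinction does not affect any of the continuity or subgroup verifications, and the neighborhood base of open normal subgroups $\{B_{\rho}(e,r)\}_{r>0}$ that we produce simultaneously confirms the topological group structure and records the group-theoretic nature of the basic open neighborhoods of the identity.
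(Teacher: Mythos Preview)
Your proof is correct and follows essentially the same approach as the paper: both arguments verify that $B_{\rho}(e,r)$ is an open normal subgroup using isometric inversion, Lemma~\ref{translations}, and Lemma~\ref{multdist}, and then deduce the topological group structure. The only cosmetic difference is that you verify continuity of multiplication directly via an $\epsilon/2$ estimate, whereas the paper appeals to the standard fact that a neighborhood base of open normal subgroups together with homeomorphic translations yields a topological group.
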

\begin{proof}
Since the open balls $B_{\rho}(e,r)$ form a neighborhood base at $e$ and translations are homeomorphisms (Lemma \ref{translations}), it will follow that $\pinx$ is a topological group once we show that $B_{\rho}(e,r)$ is an open normal subgroup.

Since $\rho(a,e)=\rho(a^{-1},e)$, $B_{\rho}(e,r)$ is closed under inversion. Additionally, by Lemma \ref{translations}, we have \[\rho(bab^{-1},e)=\rho(a,b^{-1}b)=\rho(a,e)\]for all $a,b\in \pinx$. Thus $B_{\rho}(e,r)$ is closed under conjugation (particularly when $n=1$). Finally, if $\rho(a,e))<r$ and $\rho(b,e)<r$, then $\rho(ab,e)\leq \max\left\{\rho(a,e),\rho(b,e)\right\}<r$ by Lemma \ref{multdist} and it follows that $B_{\rho}(e,r)$ is closed under multiplication.
\end{proof}

\begin{definition}
For a metric space $(X,d)$ and $x_0\in X$, let $\pinmet(X,x_0)$ denote the $n$-th homotopy group equipped with the topology induced by the pseudometric $\rho$. We call this topology the \textit{pseudometric topology (induced by $d$)}.
\end{definition}

Since $\pinmet(X,x_0)$ is a topological group, open subgroups of $\pinmet(X,x_0)$ are also closed. Therefore, $\pinmet(X,x_0)$ is zero-dimensional. On the other hand, $\pinmet(X,x_0)$ need not be Hausdorff, since the closed normal subgroup $\bigcap_{r>0}B_{\rho}(e,r)=\{a\in \pi_n(X,x_0)\mid \rho(a,e)=0\},$ is equal to $\overline{\{e\}}$, the closure of the identity element, which may be non-trivial. In particular, $\overline{\{e\}}$ is non-trivial if and only if there exist sequences $\{\alpha_k\}_{k\in\bbn}$ and $\{\beta_k\}_{k\in\bbn}$ in $\loopnx$ such that $[\alpha_k]=[\alpha_{k+1}]$, $[\beta_k]=[\beta_{k+1}]$ and $\displaystyle\lim_{k\to \infty}\mu(\alpha_k,\beta_k)=0$.
\begin{proposition}\label{uniformlyinduced}
Let $(X,d)$ and $(Y,d')$ be metric spaces respectively inducing psuedometric $\rho$ on $\pi_n(X,x_0)$ and $\rho '$ on $\pi_n(Y,y_0)$. If $f:(X,d)\to (Y,d')$ is a uniformly continuous map such that $f(x_0)=y_0$, then the induced homomorphism $f_{\#}:(\pinmet(X,x_0),\rho)\to (\pinmet(Y,y_0),\rho ')$ is uniformly continuous.
\end{proposition}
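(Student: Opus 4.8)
The plan is to verify uniform continuity directly from the definitions: given $\epsilon>0$, I must produce $\delta>0$ so that $\rho(a,b)<\delta$ forces $\rho'(f_{\#}(a),f_{\#}(b))<\epsilon$ for all $a,b\in\pinx$. The crux is the elementary observation that post-composition with $f$ is uniformly continuous as a map of loop spaces. Writing $\mu'$ for the uniform metric on $\Omega^n(Y,y_0)$, I would first establish the following: if $f$ is uniformly continuous, then for every $\epsilon>0$ there is $\delta>0$ with $\mu(\alpha,\beta)<\delta \Rightarrow \mu'(f\circ\alpha,f\circ\beta)\leq\epsilon$ for all $\alpha,\beta\in\loopnx$. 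Indeed, choosing $\delta$ from the uniform continuity of $f$ at level $\epsilon$ (so that $d(x,x')<\delta$ implies $d'(f(x),f(x'))<\epsilon$), the bound $\mu(\alpha,\beta)<\delta$ gives $d(\alpha(t),\beta(t))<\delta$ for every $t\in\ui^n$ \emph{simultaneously}, whence $d'(f(\alpha(t)),f(\beta(t)))<\epsilon$ pointwise and $\mu'(f\circ\alpha,f\circ\beta)\leq\epsilon$ after taking the supremum over $t$.

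To pass from $\mu$ to the quotient pseudometrics, I would use that for any $a,b\in\pinx$ and any representatives $\alpha\in a$, $\beta\in b$, the loops $f\circ\alpha$ and $f\circ\beta$ represent $f_{\#}(a)$ and $f_{\#}(b)$ respectively; this is exactly the defining compatibility of $f_{\#}$ with the canonical surjections $\pi$. Hence the infimum defining $\rho'$ is bounded above by $\rho'(f_{\#}(a),f_{\#}(b))\leq \mu'(f\circ\alpha,f\circ\beta)$. Now, given $\rho(a,b)<\delta$, the definition of $\rho$ as an infimum lets me select representatives $\alpha\in a$, $\beta\in b$ with $\mu(\alpha,\beta)<\delta$, and combining the two inequalities above yields $\rho'(f_{\#}(a),f_{\#}(b))\leq\epsilon$. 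Choosing $\delta$ to correspond to $\epsilon/2$ in the loop-space step upgrades this to the strict inequality $\rho'(f_{\#}(a),f_{\#}(b))<\epsilon$, completing the argument.

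There is little genuine difficulty here, but two points deserve care. First, one must check that $f\circ\alpha$ is a legitimate based $n$-loop in $Y$: since $\alpha(\partial\ui^n)=\{x_0\}$ and $f(x_0)=y_0$, we have $(f\circ\alpha)(\partial\ui^n)=\{y_0\}$, so $f\circ\alpha\in\Omega^n(Y,y_0)$ and $f_{\#}$ acts on representatives as claimed. Second, and more conceptually, the \textbf{main point} is that $f$ must be uniformly continuous rather than merely continuous: the modulus $\delta$ has to be chosen independently of the point $t\in\ui^n$ and of the homotopy classes $a,b$, so that a single $\delta$ controls the supremum $\mu'$ over all of $\ui^n$ for every pair of loops at once. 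Mere continuity of $f$ would only supply a pointwise, class-dependent modulus and would fail to bound the uniform metric $\mu'$ uniformly. This is precisely why the hypothesis is stated with uniform continuity, and it is the only place where that hypothesis enters the proof.
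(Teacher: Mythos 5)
Your proof is correct and follows essentially the same route as the paper's: choose $\delta$ so that post-composition with $f$ is $(\delta,\epsilon/2)$-uniformly continuous on the loop spaces, pick representatives $\alpha\in a$, $\beta\in b$ with $\mu(\alpha,\beta)<\delta$, and conclude $\rho'(f_{\#}(a),f_{\#}(b))\leq\mu'(f\circ\alpha,f\circ\beta)<\epsilon$. The only difference is that you spell out the verification that $\mu(\alpha,\beta)<\delta$ implies $\mu'(f\circ\alpha,f\circ\beta)$ is small, which the paper simply asserts.
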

\begin{proof}
Let $\mu$ and $\mu '$ be the uniform metrics on $\Omega^n(X,x_0)$ and $\Omega^n(Y,y_0)$ respectively. Suppose $\epsilon>0$. There is a $\delta>0$ such that $\mu(\alpha,\beta)<\delta$ $\Rightarrow$ $\mu'(f\circ \alpha,f\circ \beta)<\epsilon/2$. Suppose $\rho(g,h)<\delta$ for $g,h\in \pinmet(X,x_0)$. There are $\alpha\in g$, $\beta\in h$ such that $\mu(\alpha,\beta)<\delta$. Thus $\mu'(f\circ \alpha,f\circ \beta)<\epsilon/2$. It follows that $\rho'(f_{\#}(g),f_{\#}(h))=\rho'([f\circ \alpha],[f\circ \beta])<\epsilon$.
\end{proof}
The following example illustrates that, in general, the topology on $\pinmet(X,x_0)$ induced by the pseudometric $\rho$ may vary with our original choice of metric on $X$.
\begin{example}\label{puncturedplane}
The cylinder $X=\mathbb{R}\times S^n$ and punctured real space $Y=\mathbb{R}^{n+1}\backslash\{\mathbf{0}\}$ (with the Euclidean metrics) are homeomorphic and may be identified as topological spaces. However, the resulting pseudometrics on the $n$-th homotopy group induced non-equivalent group topologies. In particular, if $x_0=(0,d_0)$, then the resulting pseudometric $\rho_1$ on $\pi_n(\mathbb{R}\times S^n,x_0)$ is discrete. For a proof one could apply Lemma \ref{discretenesslemma} below to see that $\pinmet(S^n,d_0)$ is discrete and then apply Proposition \ref{uniformlyinduced} to the projection map $\mathbb{R}\times S^n\to S^n$. On the other hand, let $\rho_2$ denote the resulting pseudometric on $\pi_n(Y,d_0)$. For $n\geq 2$, let $\gamma_n:\ui\to Y$ to be the linear path from $d_0$ to $(1/n,0,\dots,0)$ and let $\alpha_n:S^n\to Y$ be the embedding of the n-sphere of radius $1/n$ centered at the origin. Now consider the path-conjugates $\gamma_n\ast\alpha_n$, all of which represent a generator $g$ of $\pi_n(Y,y_0)\cong \mathbb{Z}$. The path-conjugate $\gamma_n\ast c_{n}$ is null-homotopic (where $c_n\in \Omega^n(Y,\gamma_n(1))$ is the constant map) and \[\lim_{n\to \infty}\mu(\gamma_n\ast c_n,\gamma_n\ast \alpha_n)=0.\]Thus $\rho_2(g,1)=0$. It follows that the resulting pseudometric group $\pinmet(Y,d_0)$ is indiscrete.
\end{example}
\begin{theorem}\label{independentthm}
If $X$ is a path-connected, compact, metrizable topological space and $x_0\in X$, then the homeomorphism type of $\pinmet(X,x_0)$ is independent of the choice of metric on $X$.
\end{theorem}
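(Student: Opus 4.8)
The plan is to reduce the statement entirely to Proposition \ref{uniformlyinduced}, with the classical Heine--Cantor theorem supplying the one genuinely topological input. Fix two metrics $d$ and $d'$ on $X$, both inducing the given compact topology, and let $\rho$ and $\rho'$ be the pseudometrics they respectively induce on the homotopy group. Because the homotopy group $\pi_n(X,x_0)$ depends only on the topology of $X$ and not on the choice of metric, the underlying group is literally the same in both cases; what must be shown is that the identity map $\mathrm{id}:\pi_n(X,x_0)\to\pi_n(X,x_0)$ is a homeomorphism from $(\pinmet(X,x_0),\rho)$ onto $(\pinmet(X,x_0),\rho')$. It suffices to prove this identity map and its inverse are both (uniformly) continuous.

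The key observation is that the identity map $\mathrm{id}_X:(X,d)\to(X,d')$ is continuous, since $d$ and $d'$ induce the same topology, and $X$ is compact. By the Heine--Cantor theorem, a continuous map from a compact metric space to a metric space is automatically uniformly continuous, so $\mathrm{id}_X:(X,d)\to(X,d')$ is uniformly continuous; exchanging the roles of $d$ and $d'$ shows $\mathrm{id}_X:(X,d')\to(X,d)$ is uniformly continuous as well. Both maps fix the basepoint $x_0$. I would then apply Proposition \ref{uniformlyinduced} to each direction. Since $\mathrm{id}_X$ induces the identity homomorphism $(\mathrm{id}_X)_{\#}=\mathrm{id}_{\pi_n(X,x_0)}$ on homotopy groups, the proposition yields that $\mathrm{id}:(\pinmet(X,x_0),\rho)\to(\pinmet(X,x_0),\rho')$ is uniformly continuous, and symmetrically that its inverse is uniformly continuous. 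Hence the identity is a uniform homeomorphism between the two pseudometric groups, so in particular the two topologies agree and the homeomorphism type of $\pinmet(X,x_0)$ is independent of the metric.

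There is no deep obstacle here once Proposition \ref{uniformlyinduced} is in hand; the entire weight of the argument rests on the passage from continuity to \emph{uniform} continuity of the identity map, which is exactly the step that consumes the compactness hypothesis. I would emphasize that this is where compactness is essential and cannot be dropped: Example \ref{puncturedplane} exhibits two uniformly inequivalent metrics on a single (noncompact) space whose induced pseudometric topologies differ drastically (one discrete, one indiscrete), precisely because the identity between the two metrics fails to be uniformly continuous. Thus the only care needed in writing the proof is to note that the group $\pi_n(X,x_0)$ and its identity self-map are genuinely metric-independent, so that $(\mathrm{id}_X)_{\#}$ really is the identity, and to invoke Heine--Cantor in both directions.
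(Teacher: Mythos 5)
Your proposal is correct and follows exactly the paper's argument: both use compactness (Heine--Cantor) to make the identity map $(X,d)\to(X,d')$ uniformly continuous in both directions and then invoke Proposition \ref{uniformlyinduced} to conclude the induced identity homomorphisms are continuous inverse isomorphisms. No gaps; your additional remarks about where compactness is consumed match the paper's intent.
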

\begin{proof}
Suppose metrics $d_1$ and $d_2$ both induced the topology of $X$. Let $\rho_1$ and $\rho_2$ be the respective pseudometrics on $\pi_n(X,x_0)$. Since $X$ is compact, the identity maps $id:(X,d_1)\to (X,d_2)$ and $id:(X,d_2)\to (X,d_1)$ are uniformly continuous. By \Cref{uniformlyinduced}, the induced identity homomorphisms $(\pi_n(X,x_0),\rho_1)\to (\pi_n(X,x_0),\rho_2)$ and $(\pi_n(X,x_0),\rho_2)\to (\pi_n(X,x_0),\rho_1)$ (with the respective pseudometrics) are continuous and thus inverse isomorphisms.
\end{proof}
Next, we observe that, in general, the isomorphism class of the topological group $\pinmet(X,x_0)$ does not depend on the choice of basepoint. Fix a retraction $r:S^n\times \ui\to S^n\times \{0\}\cup \{d_0\}\times \ui$. For any path $\gamma:\ui\to X$ and map $\alpha:(S^n,d_0)\to (X,\gamma(1))$, we define the \textit{path-conjugate} $\gamma\ast \alpha:(S^n,d_0)\to (X,\gamma(0))$ to be the composition of $r(d,0):S^n\to S^n\times \{0\}\cup \{d_0\}\times \ui$ followed by applying $\gamma$ to $\{d_0\}\times \ui$ and $\alpha$ to $ S^n\times \{0\}$. This defines a natural map $\Omega^n(X,\gamma(1))\to \Omega^n(X,\gamma(0))$, $\beta\mapsto \gamma\ast\beta$, which induces a change-of-basepoint isomorphism $\Gamma:\pi_n(X,\gamma(1))\to \pi_n(X,\gamma(0))$ on homotopy classes. Moreover, when $\gamma$ is a loop, the action of loops on $n$-loops defines a jointly continuous map $\Omega(X,x_0)\times \Omega^n(X,x_0)\to \Omega^n(X,x_0)$, $(\gamma,\alpha)\mapsto\gamma\ast\alpha$. This map induces the usual $\pi_1$-action $[\gamma]\ast[\alpha]=[\gamma\ast\alpha]$ on $\pi_n(X,x_0)$.

\begin{proposition}\label{isomprop}
For any path $\gamma:\ui\to X$, the group isomorphism \[\Gamma:\pinmet(X,\gamma(1))\to \pinmet(X,\gamma(0)),\] $\Gamma([\beta])=[\gamma\ast \beta]$ is an isometry.
\end{proposition}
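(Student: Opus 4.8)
The plan is to prove the non-expanding (1-Lipschitz) property of $\Gamma$ at the level of the uniform metric and then bootstrap it to an isometry by applying the same estimate to the reverse path $\revg$. Write $\mu_0,\mu_1$ for the uniform metrics on $\Omega^n(X,\gamma(0))$ and $\Omega^n(X,\gamma(1))$ (both induced by the single metric $d$ on $X$) and $\rho_0,\rho_1$ for the associated pseudometrics. The starting observation is structural: because the retraction $r$ is fixed once and for all, the operation $\beta\mapsto\gamma\ast\beta$ factors as $\gamma\ast\beta = H\circ\Phi$, where $\Phi:S^n\to S^n\times\{0\}\cup\{d_0\}\times\ui$ is a fixed map independent of $\gamma$ and $\beta$, and $H$ restricts to $\beta$ on $S^n\times\{0\}$ and to $\gamma$ on the edge $\{d_0\}\times\ui$. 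The point is that $\gamma\ast\beta$ uses $\gamma$ on the collar region regardless of $\beta$, and uses $\beta$ (reparametrized by $\Phi$) elsewhere.

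The key step is the loop-level estimate $\mu_0(\gamma\ast\beta_1,\gamma\ast\beta_2)\le\mu_1(\beta_1,\beta_2)$ for all $\beta_1,\beta_2\in\Omega^n(X,\gamma(1))$. To see this, fix $s\in S^n$ and consider $\Phi(s)$. If $\Phi(s)$ lies on the edge $\{d_0\}\times\ui$, then $\gamma\ast\beta_1(s)$ and $\gamma\ast\beta_2(s)$ are the same point $\gamma(t)$, contributing $0$ to the supremum. If instead $\Phi(s)=(\sigma(s),0)\in S^n\times\{0\}$ for the coordinate map $\sigma$, then $\gamma\ast\beta_i(s)=\beta_i(\sigma(s))$, so $d(\gamma\ast\beta_1(s),\gamma\ast\beta_2(s))=d(\beta_1(\sigma(s)),\beta_2(\sigma(s)))\le\mu_1(\beta_1,\beta_2)$. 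Taking the supremum over $s\in S^n$ yields the inequality (in fact equality, since $\sigma$ maps onto $S^n$, but only $\le$ is needed). From this, the non-expanding property of $\Gamma$ follows by taking infima over representatives: for $\alpha\in a$ and $\beta\in b$ we have $\gamma\ast\alpha\in\Gamma(a)$ and $\gamma\ast\beta\in\Gamma(b)$, so $\rho_0(\Gamma(a),\Gamma(b))\le\mu_0(\gamma\ast\alpha,\gamma\ast\beta)\le\mu_1(\alpha,\beta)$, and infimizing gives $\rho_0(\Gamma(a),\Gamma(b))\le\rho_1(a,b)$.

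The remaining inequality is the main obstacle, and the reason is exactly that the preceding argument controls only the path-conjugate representatives $\gamma\ast\alpha$: the infimum defining $\rho_0(\Gamma(a),\Gamma(b))$ ranges over \emph{all} representatives of $\Gamma(a)$ and $\Gamma(b)$, which might be mutually closer than any pair of the form $(\gamma\ast\alpha,\gamma\ast\beta)$. To circumvent analyzing arbitrary representatives, I would apply the above to the reverse path $\revg$. The change-of-basepoint isomorphism induced by $\revg$ is precisely $\Gamma^{-1}$, since $\revg\ast(\gamma\ast\beta)$ is homotopic to $(\revg\cdot\gamma)\ast\beta$, which is homotopic to $\beta$ because $\revg\cdot\gamma$ is null-homotopic rel endpoints. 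The second paragraph, applied to $\revg$, shows $\Gamma^{-1}$ is also non-expanding, so that $\rho_1(a,b)=\rho_1(\Gamma^{-1}\Gamma(a),\Gamma^{-1}\Gamma(b))\le\rho_0(\Gamma(a),\Gamma(b))\le\rho_1(a,b)$. Hence equality holds throughout and $\Gamma$ is an isometry.
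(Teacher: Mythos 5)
Your proposal is correct and follows essentially the same route as the paper: establish the loop-level bound $\mu(\gamma\ast\alpha,\gamma\ast\beta)\leq\mu(\alpha,\beta)$ (the paper asserts equality outright), deduce that $\Gamma$ is non-expansive by infimizing over representatives, and then apply the identical argument to $\gamma^{-}$ to see that $\Gamma^{-1}$ is non-expansive as well. Your write-up merely supplies more detail than the paper at two points it leaves implicit, namely the case analysis over the retraction $r$ and the verification that $\gamma^{-}$ induces $\Gamma^{-1}$.
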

\begin{proof}
For all $\alpha,\beta\in \Omega^n(X,\gamma(1))$, we have $\mu(\alpha,\beta)=\mu (\gamma\ast \alpha,\gamma\ast \beta)$ and thus \[\rho([\gamma\ast \alpha],[\gamma\ast \beta])\leq \rho([\alpha],[\beta]).\] Thus $\Gamma$ is non-expansive. The inverse $\Gamma^{-1}:\piomet(X,\gamma(0))\to \piomet(X,\gamma(1))$, $\Gamma^{-1}([\beta])= [\gamma^{-}\ast\beta]$ is non-expansive for the same reason (replacing $\gamma$ with $\gamma^{-}$). Thus $\Gamma$ is an isometry.
\end{proof}
\begin{theorem}
The $\pi_1$-action $\piomet(X,x_0)\times \pinmet(X,x_0)\to \pinmet(X,x_0)$ given by $([\gamma],[\alpha])\mapsto [\gamma\ast\alpha]$ is jointly continuous. Moreover, $\piomet(X,x_0)$ acts on $\pinmet(X,x_0)$ by isometry.
\end{theorem}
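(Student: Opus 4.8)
The plan is to treat the two assertions separately, handling the isometry claim first since it is essentially immediate, and then reducing joint continuity to a single Lipschitz-type estimate at the level of loop spaces. For the isometry statement, observe that a based loop $\gamma$ at $x_0$ is in particular a path with $\gamma(0)=\gamma(1)=x_0$, so Proposition~\ref{isomprop} applies verbatim: the assignment $[\beta]\mapsto[\gamma\ast\beta]$ is precisely the change-of-basepoint isometry $\Gamma$ of that proposition, now viewed as an isometric automorphism of $\pinmet(X,x_0)$. Since the action of $[\gamma]$ coincides with $\Gamma$ and is well defined on homotopy classes, each element of $\piomet(X,x_0)$ acts by isometry, establishing the second assertion.

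For joint continuity, let $\mu_1$, $\mu$ denote the uniform metrics on $\loopspace$ and $\loopnx$, and let $\rho_1$, $\rho$ denote the induced pseudometrics on $\piomet(X,x_0)$ and $\pinmet(X,x_0)$. The key step I would establish is the loop-level non-expansiveness bound
\[
\mu(\gamma\ast\alpha,\gamma'\ast\alpha')\le\max\{\mu_1(\gamma,\gamma'),\mu(\alpha,\alpha')\}
\]
for all $\gamma,\gamma'\in\loopspace$ and $\alpha,\alpha'\in\loopnx$. This is where the structure of path-conjugation is used: because the retraction $r\colon S^n\times\ui\to S^n\times\{0\}\cup\{d_0\}\times\ui$ is fixed once and for all, the conjugates $\gamma\ast\alpha$ and $\gamma'\ast\alpha'$ are obtained by precomposing with the \emph{same} collapsing map $s\mapsto r(s,0)$ and then applying, on the arc part, $\gamma$ versus $\gamma'$ to the same parameter, and on the sphere part, $\alpha$ versus $\alpha'$ to the same point. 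Hence at every point of the domain the distance between $\gamma\ast\alpha$ and $\gamma'\ast\alpha'$ is bounded by $\mu_1(\gamma,\gamma')$ or by $\mu(\alpha,\alpha')$, and taking the supremum yields the displayed inequality (in complete analogy with the concatenation identity $\mu(\alpha\cdot\alpha',\beta\cdot\beta')=\max\{\mu(\alpha,\beta),\mu(\alpha',\beta')\}$ recorded earlier).

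Granting this estimate, passing to homotopy classes is routine. Given $g,g'\in\piomet(X,x_0)$, $a,a'\in\pinmet(X,x_0)$, and $\epsilon>0$, I would choose representatives $\gamma\in g$, $\gamma'\in g'$ with $\mu_1(\gamma,\gamma')<\rho_1(g,g')+\epsilon$ and, independently, $\alpha\in a$, $\alpha'\in a'$ with $\mu(\alpha,\alpha')<\rho(a,a')+\epsilon$. Since $\gamma\ast\alpha$ represents $g\ast a$ and $\gamma'\ast\alpha'$ represents $g'\ast a'$, the definition of $\rho$ as an infimum over representatives combined with the loop-level bound gives $\rho(g\ast a,g'\ast a')\le\max\{\rho_1(g,g'),\rho(a,a')\}+\epsilon$; letting $\epsilon\to0$ yields
\[
\rho(g\ast a,g'\ast a')\le\max\{\rho_1(g,g'),\rho(a,a')\}.
\]
Because the product topology on $\piomet(X,x_0)\times\pinmet(X,x_0)$ is induced by the pseudometric $\max\{\rho_1,\rho\}$, this shows the action map is $1$-Lipschitz, hence jointly uniformly continuous, which is exactly joint continuity. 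The only genuine obstacle is the loop-level inequality; everything else is bookkeeping with infima and the group action, so the argument stands or falls on verifying that the fixed retraction makes $\ast$ non-expansive in both variables simultaneously.
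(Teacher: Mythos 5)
Your proposal is correct and follows essentially the same route as the paper: the isometry claim is deduced from Proposition \ref{isomprop}, and joint continuity rests on the observation that the fixed retraction $r$ makes $\ast$ non-expansive in both variables at the level of loop spaces, which then passes to the pseudometrics. The only difference is cosmetic --- you package the key step as an explicit $1$-Lipschitz estimate $\rho(g\ast a, g'\ast a')\le\max\{\rho_1(g,g'),\rho(a,a')\}$, whereas the paper runs the same bound through a sequential $\epsilon/2$ argument.
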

\begin{proof}
That the action is by isometry follows immediately from Proposition \ref{isomprop}. For continuity, suppose $\{g_k\}_{k\in\bbn}\to g$ in $\piomet(X,x_0)$ and $\{a_k\}_{k\in\bbn}\to a$ in $\pinmet(X,x_0)$. Let $\epsilon>0$. Find $K\in\bbn$ such that $\max\{\rho(g_k,g),\rho(a_k,a)\}<\epsilon/2$ for all $k\geq K$. Thus for $k\geq K$, there exist $\gamma_k\in g_k$ and $\delta_k\in g$ with $\mu(\gamma_k,\delta_k)<\epsilon/2$ and $\alpha_k\in a_k$ and $\beta_k\in a$ with $\mu(\alpha_k,\beta_k)<\epsilon/2$. Our definition of the action $\ast $ of loops on $n$-loops using a fixed retraction $r$ ensures that $\mu(\gamma_k\ast \alpha_k,\delta_k\ast \beta_k)<\epsilon/2$. Since $g_k\ast a_k=[\gamma_k\ast \alpha_k]$ and $g\ast a=[\delta_k\ast\beta_k]$ for all $k\geq K$, we have $\rho(g_k\ast a_k,g\ast a)<\epsilon$. We conclude that $\{g_k\ast a_k\}_{k\in\bbn}\to g\ast a$ in $\pinmet(X,x_0)$.
\end{proof}
Finally, we compare the pseudometric topology to the quotient and $\tau$-topologies.
\begin{proposition}\label{finerthan}
The function $\pi:\loopnx\to \pinmet(X,x_0)$, $\pi(\alpha)=[\alpha]$ is continuous. Thus the topologies of $\pinqtopx$ and $\pi_{n}^{\tau}(X,x_0)$ are at least as fine as that of $\pinmet(X,x_0)$.
\end{proposition}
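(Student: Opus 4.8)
The plan is to show that $\pi$ is non-expansive with respect to the uniform metric $\mu$ on $\loopnx$ and the pseudometric $\rho$ on $\pinmet(X,x_0)$. Continuity then follows immediately, and the two statements about the quotient and $\tau$-topologies fall out from their respective universal properties, which were recalled in Section \ref{sectiontopologies}.

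First I would record the key inequality. For any $\alpha,\beta\in\loopnx$, the pair $(\alpha,\beta)$ is one of the competitors in the infimum defining $\rho([\alpha],[\beta])$, since $\alpha\in[\alpha]$ and $\beta\in[\beta]$. Hence
\[
\rho(\pi(\alpha),\pi(\beta)) = \rho([\alpha],[\beta]) = \inf\{\mu(\alpha',\beta')\mid \alpha'\in[\alpha],\ \beta'\in[\beta]\} \leq \mu(\alpha,\beta).
\]
This says $\pi$ is $1$-Lipschitz, and a Lipschitz map between (pseudo)metric spaces is continuous; therefore $\pi:\loopnx\to\pinmet(X,x_0)$ is continuous.

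For the comparison of topologies I would invoke the two characterizations from Section \ref{sectiontopologies}. The topology of $\pinqtopx$ is by definition the finest topology making $\pi$ continuous; since $\pi$ is continuous into $\pinmet(X,x_0)$, the identity map $\pinqtopx\to\pinmet(X,x_0)$ is continuous, i.e. the quotient topology is at least as fine as the pseudometric topology. For the $\tau$-topology, I would use that $\pinmet(X,x_0)$ is a topological group (Proposition \ref{normalsubgroups}) for which $\pi$ is continuous, together with the fact that $\pi_{n}^{\tau}(X,x_0)$ carries the finest \emph{group} topology making $\pi$ continuous; hence $\pi_{n}^{\tau}(X,x_0)$ is at least as fine as $\pinmet(X,x_0)$ as well.

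There is essentially no obstacle here: the only point requiring any care is the elementary inequality $\rho([\alpha],[\beta])\leq\mu(\alpha,\beta)$, and this is immediate from the definition of $\rho$ as an infimum over representatives, with the diagonal pair $(\alpha,\beta)$ always admissible. Everything else is a formal consequence of the universal properties of the quotient topology and the $\tau$-reflection.
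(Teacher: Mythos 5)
Your proof is correct and follows essentially the same route as the paper: the paper verifies sequential continuity via the same inequality $\rho([\alpha],[\alpha_k])\leq\mu(\alpha,\alpha_k)$ that you state explicitly as $1$-Lipschitzness, and it likewise dispatches the second claim by citing the universal properties of the quotient and $\tau$-topologies. Your phrasing of the key step as non-expansiveness of $\pi$ is a slightly cleaner packaging of the identical idea.
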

\begin{proof}
Suppose $\{\alpha_k\}_{k\in\bbn}\to \alpha$ in $\loopnx$ and $\epsilon>0$. There is an $K\in\mathbb{N}$ such that $\mu(\alpha,\alpha_k)<\epsilon/2$ for $k\geq K$. Thus $\rho([\alpha],[\alpha_k])<\epsilon$ for $k\geq K$ showing that $\{[\alpha_k]\}_{k\in\bbn}\to [\alpha]$ in $\pinmet(X,x_0)$. Thus $\pi$ is continuous. The second statement follows directly from the characterizations of the quotient and $\tau$-topologies in Section \ref{sectiontopologies}.
\end{proof}
\begin{remark}
Although basepoint-change isomorphisms are continuous when $\pi_n$ is given the quotient and $\tau$-topologies, the $\pi_1$-action $\pi_1(X,x)\times \pi_n(X,x)\to \pi_n(X,x)$ can be discontinuous when $\pi_1$ and $\pi_n$ are given the quotient topology \cite{Brazpioneaction}. Apparently, it is unknown if the $\pi_1$-action $\pi_1(X,x)\times \pi_n(X,x)\to \pi_n(X,x)$ is continuous in the $\tau$-topology.
\end{remark}

\section{Comparison with the Shape Topology}\label{sectioncomparewithshape}

\begin{lemma}\label{discretenesslemma}
If $K$ is a finite polyhedron and $n \geq 1$, then $\pinshape(K,x_0)$ is discrete.
\end{lemma}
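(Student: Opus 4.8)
The plan is to exploit the characterization of the shape topology recalled in Section \ref{sectiontopologies}: a neighborhood base at the identity of $\pinshape(K,x_0)$ is given by the normal subgroups $\ker\bigl(p_{\scru\#}:\pi_n(K,x_0)\to \pi_n(|\nerveu|,U_0)\bigr)$ ranging over $(\scru,U_0)\in\cov(K)$. Hence the shape topology is discrete precisely when $\{e\}$ is open, i.e. when there is a single admissible cover $(\scru,U_0)$ for which $\ker p_{\scru\#}$ is trivial --- equivalently, for which $p_{\scru\#}$ is injective. So the whole problem reduces to producing one cover whose bonding map into the nerve is injective on $\pi_n$.

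To produce such a cover I would use the simplicial structure of the polyhedron. Fix a triangulation of $K$ in which the basepoint $x_0$ is a vertex (subdividing if necessary), and let $\scru$ be the cover by open stars $\mathrm{St}(v)$ of the vertices $v$, with distinguished element $U_0=\mathrm{St}(x_0)$. Since $K$ is a finite polyhedron this cover is finite, hence locally finite, so $(\scru,U_0)\in\cov(K)$. The classical star--nerve identification then applies: vertices $\mathrm{St}(v_0),\dots,\mathrm{St}(v_k)$ span a simplex of $\nerveu$ exactly when the stars have common intersection, which happens exactly when $v_0,\dots,v_k$ span a simplex of $K$. Thus $\nerveu$ is simplicially isomorphic to the triangulation of $K$, and $|\nerveu|\cong K$ as based polyhedra, the vertex $U_0$ corresponding to $x_0$.

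The final step is to check that the canonical map $p_{\scru}$ realizes this identification. Choosing the partition of unity $\{\phi_{\mathrm{St}(v)}\}$ whose value $\phi_{\mathrm{St}(v)}(x)$ is the barycentric coordinate of $x$ at the vertex $v$, one has $\phi_{U_0}(x_0)=1$ as required, and by construction $p_{\scru}$ sends each point to the point of $|\nerveu|\cong K$ with the same barycentric coordinates. Under the star--nerve isomorphism, $p_{\scru}$ is therefore the identity map of $K$, so $p_{\scru\#}:\pi_n(K,x_0)\to\pi_n(|\nerveu|,U_0)$ is an isomorphism. In particular its kernel is trivial, so $\{e\}$ is open in $\pinshape(K,x_0)$ and the shape topology is discrete.

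I expect the only real work --- rather than a genuine obstacle --- to be the basepoint bookkeeping: arranging $x_0$ to be a vertex and confirming that the distinguished star $U_0=\mathrm{St}(x_0)$ corresponds to the correct vertex of the nerve so that the identification is based. The star--nerve isomorphism and the fact that $p_{\scru}$ realizes the identity are standard (a based version of the nerve lemma for simplicial complexes), so no delicate estimates are needed; the entire content lies in selecting the right single cover.
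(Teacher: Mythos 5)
Your argument is a correct proof of the statement as literally printed: the open star cover $\scru$ of a triangulation having $x_0$ as a vertex has nerve simplicially isomorphic to that triangulation, the barycentric canonical map $p_{\scru}$ realizes the identity under this identification, so $\ker p_{\scru\#}$ is trivial, $\{e\}$ is open, and $\pinshape(K,x_0)$ is discrete. The difficulty is that this is not the statement the paper proves or needs. The ``$\pinshape$'' in the lemma is evidently a typo for ``$\pinmet$'': the paper's own proof concludes that $\pinmet(K,x_0)$ is discrete, and both places the lemma is invoked (Proposition~\ref{metvshape}, where discreteness of $\pinmet(|\nerveu|,U_0)$ is combined with Proposition~\ref{uniformlyinduced} to get open kernels, and Example~\ref{puncturedplane}) require discreteness of the \emph{pseudometric} topology on $\pi_n$ of a finite polyhedron. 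Discreteness of the shape topology gives discreteness of the pseudometric topology only if one already knows the pseudometric topology refines the shape topology --- but that is precisely Proposition~\ref{metvshape}, whose proof cites this lemma. Substituting your argument would therefore make the paper circular.

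The paper's proof is genuinely metric rather than shape-theoretic: equip $K$ with a topologically compatible $CAT(1)$ metric $d'$ \cite[Corollary 5.19]{BH}; then there is an $\epsilon>0$ such that any two maps into $K$ that are uniformly $\epsilon$-close are joined by the canonical geodesic homotopy, so $B_{\rho}(e,\epsilon)=\{e\}$ and $\pinmet(K,x_0)$ is discrete for the metric $d'$; Theorem~\ref{independentthm} (which uses compactness) then transfers discreteness to the originally given metric $d$. If you prefer to stay close to your simplicial setup, the missing ingredient is a uniform estimate of exactly this kind: an $\epsilon>0$, depending on a Lebesgue number for the star cover, such that any two $\epsilon$-close $n$-loops admit a common simplicial approximation and are hence homotopic. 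That quantitative step is the actual content of the lemma, and it is absent from your proposal.
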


\begin{proof}
If the compact metric space $(K,d)$ is the underlying space of a finite simplicial complex, then $K$ admits a topologically compatible $CAT(1)$ metric $d'$ \cite[Corollary 5.19]{BH}. Consequently there exists $\epsilon>0$ so that for any space $Y$, if the maps $f:Y\to K$ and $g:Y\to K$ are uniformly close with $d(f(y),g(y))<\epsilon$ for all $y \in Y$, then $f$ and $g$ are canonically homotopic via a homotopy $H$. The homotopy $H$ maps $f(y)$ to $g(y)$ linearly with time, along the unique geodesic $[f(y),g(y)]$ in $(K,d')$. The existence and continuity of $H$ follows from Proposition 1.4 and Corollary 3.13 of \cite{BH}. Thus $\pinmet(K,x_0)$ is discrete with the induced pseudometric generated by $d'$. Since $K$ is compact, Theorem \ref{independentthm} gives that the topology of $\pinmet(K,x_0)$ is independent of the choice of metric on $K$. Hence, $\pinmet(K,x_0)$ is discrete with the induced pseudometric generated by $d$.
\end{proof}

\begin{proposition}\label{metvshape}
If $X$ is a compact metric space, then the topology of $\pinmet(X,x_0)$ is at least as fine as that of $\pinshape(X,x_0)$.
\end{proposition}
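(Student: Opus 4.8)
The plan is to show that the identity map $\pinmet(X,x_0)\to\pinshape(X,x_0)$ is continuous. Since both are topological groups, I only need continuity at the identity $e$, and since the normal subgroups $\ker\big(p_{\scru\#}:\pinx\to\pi_n(|\nerveu|,U_0)\big)$ with $(\scru,U_0)\in\cov(X)$ form a neighborhood base at $e$ for the shape topology, it suffices to produce, for each such $\scru$, a radius $r>0$ with $B_{\rho}(e,r)\subseteq\ker p_{\scru\#}$. Concretely, I want to arrange that whenever $\rho(a,e)<r$, a representative $\alpha\in a$ has $p_\scru\circ\alpha$ null-homotopic in $|\nerveu|$.

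Fix $(\scru,U_0)$. Because $X$ is compact and $\scru$ is locally finite, $\scru$ is finite, so I would fix a finite partition of unity $\{\phi_U\}_{U\in\scru}$ subordinate to $\scru$ with $\phi_{U_0}(x_0)=1$ and its nerve map $p_\scru$. For each $U$ the support $\operatorname{supp}\phi_U$ is a compact subset of the open set $U$, so $\delta_U:=d(\operatorname{supp}\phi_U,\,X\setminus U)>0$, and I set $r:=\min_U\delta_U>0$. The engine of the argument is the elementary observation that if $d(x,y)<r$ and $\phi_U(y)>0$, then $y\in\operatorname{supp}\phi_U$, whence $x$ cannot lie in $X\setminus U$; that is, $x\in U$.

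Now suppose $\rho(a,e)<r$. By the definition of $\rho$ as an infimum there are $\alpha\in a$ and a null-homotopic $\gamma\in e$ with $\mu(\alpha,\gamma)<r$. Fixing $t\in\ui^n$, I would consider the vertices active at $t$, namely those $U$ with $\phi_U(\alpha(t))>0$ or $\phi_U(\gamma(t))>0$. Each active $U$ of the first kind contains $\alpha(t)$, and by the observation above (with $x=\alpha(t)$, $y=\gamma(t)$) each active $U$ of the second kind also contains $\alpha(t)$; hence all active vertices share the common point $\alpha(t)$, so they span a simplex of $\nerveu$ and both $p_\scru(\alpha(t))$ and $p_\scru(\gamma(t))$ lie in one closed simplex. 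Consequently the straight-line homotopy $H(t,s)=(1-s)\,p_\scru(\alpha(t))+s\,p_\scru(\gamma(t))$ stays inside $|\nerveu|$. On $\partial\ui^n$ one has $\alpha(t)=\gamma(t)=x_0$ and $p_\scru(x_0)=U_0$, so $H$ is rel $\partial\ui^n$; thus $p_{\scru\#}(a)=[p_\scru\circ\alpha]=[p_\scru\circ\gamma]$, which is trivial since $\gamma$ is null-homotopic. This gives $B_{\rho}(e,r)\subseteq\ker p_{\scru\#}$ and finishes the reduction.

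The step I expect to carry the real content is passing from uniform closeness of $\alpha$ and $\gamma$ to a homotopy of their nerve maps — specifically, guaranteeing that $p_\scru(\alpha(t))$ and $p_\scru(\gamma(t))$ lie in a common simplex so the linear homotopy is defined. Mere pointwise closeness does not force the two carriers to span a common simplex (in a complex, nearby points may sit in simplices meeting only in a low-dimensional face), and the fix is precisely to choose $r$ below the distance from each $\operatorname{supp}\phi_U$ to $X\setminus U$, which pins $\alpha(t)$ inside every cover element active for $\gamma(t)$. The remaining points — finiteness of $\scru$, the rel-boundary condition, and transporting the null-homotopy of $\gamma$ through $p_\scru$ — are routine once this common-simplex condition is secured.
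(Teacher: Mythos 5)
Your argument is correct, but it takes a genuinely different route from the paper's. The paper deduces the proposition from two general facts: Lemma \ref{discretenesslemma}, which endows a finite polyhedron $K$ with a compatible $CAT(1)$ metric (via Bridson--Haefliger) so that uniformly close maps into $K$ are geodesically homotopic, whence $\pinmet(K,x_0)$ is discrete; and Proposition \ref{uniformlyinduced}, which says that a uniformly continuous map induces a continuous homomorphism on $\pinmet$. Applied to the canonical maps $p_{\scru}:X\to |N(\scru)|$ for finite $\scru$, these show that each $\ker p_{\scru\#}=p_{\scru\#}^{-1}(e)$ is $\rho$-open. You instead establish the containment $B_{\rho}(e,r)\subseteq \ker p_{\scru\#}$ directly by a contiguity argument: choosing $r$ below $\min_U d(\operatorname{supp}\phi_U,\,X\setminus U)$ forces every vertex active at $\alpha(t)$ or at $\gamma(t)$ to contain the point $\alpha(t)$, so the carriers span a common simplex and the straight-line homotopy between $p_\scru\circ\alpha$ and $p_\scru\circ\gamma$ is defined and rel basepoint. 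Your version is more elementary and self-contained---it needs no metric geometry, only the standard ``uniformly close maps have contiguous nerve maps'' estimate---but it is tailored to nerves of covers, whereas Lemma \ref{discretenesslemma} is a statement about arbitrary finite polyhedra that the paper reuses elsewhere (e.g.\ in Example \ref{puncturedplane} and Example \ref{cylinderexample}). Two minor points to tidy: exclude from the minimum any $U$ with $X\setminus U=\emptyset$ (or adopt the convention that the distance to the empty set is infinite), and note that joint continuity of the linear homotopy into $|N(\scru)|$ is unproblematic precisely because $\scru$ is finite, so $|N(\scru)|$ is a finite complex realized metrically in Euclidean space.
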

\begin{proof}
Since $X$ is compact, we may construct $\check{\pi}_{n}(X,x_0)$ by replacing $\cov(X)$ with a cofinal sequence of finite covers. Whenever $\scru\in\cov(X)$ is finite, $|N(\scru)|$ is a finite polyhedron and thus $\pinmet(|N(\scru)|,U_0)$ is discrete by Lemma \ref{discretenesslemma}. Since $X$ is compact, any canonical map $p_{\scru}:X\to |N(\scru)|$ will be uniformly continuous and thus the induced homomorphism $p_{\scru\#}:\pinmet(X,x_0)\to \pinmet(|N(\scru)|,U_0)$ will be continuous by Proposition \ref{uniformlyinduced}. Since $\pinshape(X,x_0)$ has the initial topology with respect to homomorphisms $p_{\scru\#}$ with discrete codomains, it follows that the topology of $\pinmet(X,x_0)$ is at least as fine as that of $\pinshape(X,x_0)$.
\end{proof}
\begin{remark}\label{noncompact}
The previous proposition fails when $X$ is no longer required to be compact. If $X=\mathbb{R}^{n+1}\backslash\{\bf{0}\}$ is punctured real $(n+1)$-space as in \Cref{puncturedplane}, then $\pinmet(X,x_0)$ is isomorphic to $\bbz$ with the indiscrete topology whereas $\pinshapex $ is isomorphic to $\bbz$ with the discrete topology.
\end{remark}
\begin{definition}
Let $n\geq 1$ and $\scru$ be an open cover of $X$. The \textit{$n$-th Spanier group of $(X,x_0)$ with respect to $\scru$} is the subgroup $\span(\scru,x_0)$ of $\pi_n(X,x_0)$ generated by path-conjugates $[\gamma\ast f]\in\pi_n(X,x_0)$ where $\gamma:(\ui,0)\to (X,x_0)$ is a path and $\alpha:(S^n,d_0)\to (X,\gamma(1))$ has image in $U$ for some $U\in\scru$. The \textit{$n$-th Spanier group} of $(X,x_0)$ is the intersection $\span(X,x_0)=\bigcap_{\scru\in \cov(X)}\span(\scru,x_0)$.
\end{definition}

Spanier groups (in dimension $n=1$) first appeared in \cite{Spanier66} and have been used frequently in the past decade. Higher Spanier groups were more recently introduced \cite{BKPnspanier}. Note that since Spanier groups $\span(\scru,x_0)$, $\cov(X)$ are always normal subgroups of $\pi_n(X,x_0)$, the sets of left cosets $\{[g]\span(\scru,x_0)\mid \scru\in\cov(X),[g]\in\pi_n(X,x_0)\}$ forms a basis for a topology on $\pi_n(X,x_0)$.

\begin{definition}\label{defSpaniertopology}
We refer to the topology on $\pi_n(Y,y_0)$ generated by cosets of the Spanier groups $\span(\scru,y_0)$ as the \textit{Spanier topology}.
\end{definition}

In the current paper, the Spanier topology will not be studied in detail but will only serve as a convenient bound for the pseudometric topology. In order to make these comparisons, we recall known results that compare Spanier groups with homomorphisms $p_{\scru\#}:\pi_n(X,x_0)\to \pi_n(|N(\scru)|,U_0)$ induced by canonical maps. The following has a straightforward proof. See \cite[Prop 4.13]{BKPnspanier} or \cite[Lemma 3.10]{AcetiBrazas}.

\begin{proposition}
For every $\scru\in\cov(X)$ and choice of canonical map $p_{\scru}:X\to |N(\scru)|$, there exists a $\scrv\in \cov(X)$ such that $\span(\scrv,x_0)\leq \ker (p_{\scru\#}:\pi_n(X,x_0)\to \pi_n(|N(\scru)|,U_0))$.
\end{proposition}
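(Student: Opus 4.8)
The plan is to construct the refinement $\scrv$ directly from the partition of unity $\{\phi_U\}_{U\in\scru}$ that defines the chosen canonical map $p_{\scru}$, arranging matters so that each element of $\scrv$ is sent by $p_{\scru}$ into a contractible open star of the nerve. Concretely, for each $U\in\scru$ set $V_U=\{x\in X\mid \phi_U(x)>0\}=\phi_U^{-1}\big((0,\infty)\big)$ and let $\scrv=\{V_U\mid U\in\scru\}$, with distinguished element $V_{U_0}$. First I would check that $(\scrv,V_{U_0})\in\cov(X)$: each $V_U$ is open as the preimage of an open set; the family covers $X$ because $\sum_{U}\phi_U(x)=1$ forces some $\phi_U(x)>0$ at every $x$; and since the partition of unity is subordinated to $\scru$ we have $V_U\subseteq \mathrm{supp}(\phi_U)\subseteq U$, so $\scrv$ refines $\scru$ and inherits local finiteness from the locally finite cover $\scru$. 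Finally $x_0\in V_{U_0}$ because $\phi_{U_0}(x_0)=1$.

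The geometric heart of the argument is that $p_{\scru}(V_U)$ lands in the open star $\mathrm{st}(U)$ of the vertex $U$ in $|\nerveu|$, i.e. the set of points whose barycentric coordinate at $U$ is positive: indeed the $U$-coordinate of $p_{\scru}(x)$ is exactly $\phi_U(x)$, which is positive precisely when $x\in V_U$. The open star $\mathrm{st}(U)$ is contractible, since the straight-line homotopy $h_t(y)=(1-t)y+tU$ carries each point $y$ of $\mathrm{st}(U)$ toward the vertex $U$ inside its carrier simplex (which contains $U$), keeping the $U$-coordinate positive for $t<1$. In particular $\pi_n(\mathrm{st}(U),z)=0$ for every basepoint $z\in\mathrm{st}(U)$.

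With these observations in hand, I would verify that every generator of $\span(\scrv,x_0)$ lies in $\ker p_{\scru\#}$. Take such a generator $[\gamma\ast\alpha]$, where $\gamma$ is a path from $x_0$ and $\alpha\colon(S^n,d_0)\to(X,\gamma(1))$ has image contained in some $V\in\scrv$, say $V=V_U$. By naturality of the path-conjugate construction under the map $p_{\scru}$ (the conjugate is assembled from $\gamma$ and $\alpha$ via a fixed retraction, and composing with $p_{\scru}$ just replaces $\gamma,\alpha$ by $p_{\scru}\circ\gamma,p_{\scru}\circ\alpha$), we get
\[
p_{\scru\#}([\gamma\ast\alpha])=\big[(p_{\scru}\circ\gamma)\ast(p_{\scru}\circ\alpha)\big].
\]
Now $p_{\scru}\circ\alpha$ is a based $n$-loop whose image lies in $\mathrm{st}(U)$, hence it represents the trivial element of $\pi_n\big(\mathrm{st}(U),p_{\scru}(\gamma(1))\big)=0$ and therefore also of $\pi_n\big(|\nerveu|,p_{\scru}(\gamma(1))\big)$. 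Since path-conjugation by $p_{\scru}\circ\gamma$ is the change-of-basepoint isomorphism, it sends this trivial class to the identity of $\pi_n(|\nerveu|,U_0)$, so $p_{\scru\#}([\gamma\ast\alpha])=e$. As $\span(\scrv,x_0)$ is generated by such conjugates and $\ker p_{\scru\#}$ is a subgroup, this yields $\span(\scrv,x_0)\leq \ker p_{\scru\#}$. The only point demanding genuine care is the basepoint bookkeeping in this last step: the loop $p_{\scru}\circ\alpha$ is based at $p_{\scru}(\gamma(1))$ rather than at the vertex $U_0$ or $U$, so one must use that the contractible star kills \emph{all} based classes regardless of the basepoint within it and that path-conjugation preserves triviality—everything else is routine verification.
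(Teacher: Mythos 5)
Your proof is correct and is exactly the standard argument behind this proposition (which the paper delegates to \cite[Prop 4.13]{BKPnspanier} and \cite[Lemma 3.10]{AcetiBrazas}): take $\scrv$ to be the preimages under $p_{\scru}$ of the open vertex stars, i.e.\ $V_U=\phi_U^{-1}((0,\infty))$, and use that each star is contractible so that $p_{\scru}\circ\alpha$ is null-homotopic and path-conjugation preserves triviality. The verifications of refinement, local finiteness, naturality of $\ast$ under $p_{\scru}$, and the basepoint bookkeeping are all handled correctly.
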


Since the shape topology on $\pi_n(X,x_0)$ is generated by left cosets of subgroups of the form $\ker (p_{\scru\#})$, we have the following.

\begin{corollary}\cite[Remark 5.5]{AcetiBrazas}\label{spanierfinerthenshape}
For any metrizable space $X$, Spanier topology on $\pi_n(X,x_0)$ is at least as fine as the shape topology.
\end{corollary}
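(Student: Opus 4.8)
The plan is to unwind the definitions and exhibit, for each kernel subgroup $\ker(p_{\scru\#})$ defining the shape topology, a Spanier group contained in it; this is essentially already packaged by the preceding proposition. Recall from the excerpt that the shape topology on $\pi_n(X,x_0)$ has, as a neighborhood base at the identity, the normal subgroups $\ker(p_{\scru\#}:\pi_n(X,x_0)\to \pi_n(|N(\scru)|,U_0))$ ranging over $(\scru,U_0)\in\cov(X)$, so its open sets are unions of cosets of such kernels. The Spanier topology is generated by cosets of the Spanier groups $\span(\scru,x_0)$. Since both topologies arise as coset topologies of families of normal subgroups of the \emph{same} group $\pi_n(X,x_0)$, comparing fineness reduces to a purely lattice-theoretic comparison of the two families of subgroups.

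First I would recall the standard fact that a topology generated by cosets of a family of normal subgroups is at least as fine as another such topology precisely when every basic neighborhood of the identity in the coarser topology contains a basic neighborhood of the identity in the finer one. Concretely, to show the Spanier topology is at least as fine as the shape topology, it suffices to show that for each $(\scru,U_0)\in\cov(X)$ there is a $\scrv\in\cov(X)$ with $\span(\scrv,x_0)\leq \ker(p_{\scru\#})$. But this is exactly the content of the proposition immediately preceding the corollary: for every $\scru\in\cov(X)$ and choice of canonical map $p_{\scru}$, there exists $\scrv\in\cov(X)$ with $\span(\scrv,x_0)\leq \ker(p_{\scru\#})$.

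Assembling these pieces, the argument runs as follows. Let $W$ be open in the shape topology and let $g\in W$. Then $W$ contains a coset $g\cdot\ker(p_{\scru\#})$ for some $(\scru,U_0)\in\cov(X)$. By the preceding proposition, choose $\scrv\in\cov(X)$ with $\span(\scrv,x_0)\leq\ker(p_{\scru\#})$. Then $g\cdot\span(\scrv,x_0)\subseteq g\cdot\ker(p_{\scru\#})\subseteq W$, and $g\cdot\span(\scrv,x_0)$ is a basic open set in the Spanier topology containing $g$. Since $g\in W$ was arbitrary, $W$ is open in the Spanier topology, and thus the Spanier topology refines the shape topology.

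I do not anticipate a genuine obstacle here, since the corollary is stated as an immediate consequence of a quoted proposition and is attributed to \cite{AcetiBrazas}; the only point requiring minor care is the bookkeeping that containment of the smaller normal subgroup family yields the finer coset topology, together with the observation that metrizability of $X$ guarantees $\cov(X)$ is rich enough (i.e.\ that locally finite open covers are cofinal) for both coset topologies to be well-defined in the form used above. I would state this refinement-via-containment principle as a one-line lemma or simply inline it, and note that the direction of the inequality $\span(\scrv,x_0)\leq\ker(p_{\scru\#})$ is precisely what forces the Spanier topology to be the finer of the two.
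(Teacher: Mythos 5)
Your argument is correct and is exactly the paper's route: the paper deduces the corollary in one line from the immediately preceding proposition, since the shape topology is generated by cosets of the kernels $\ker(p_{\scru\#})$ and the proposition places a Spanier group inside each such kernel. You have simply spelled out the coset-refinement bookkeeping that the paper leaves implicit.
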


Determining when the Spanier topology agrees with the shape topology is a more technical matter that has been addressed in \cite{AcetiBrazas}. To state this properly, we recall the following definition.

\begin{definition}
Let $n\geq 0$. A topological space $Y$ is $LC^n$ at $y\in Y$ if for every neighborhood $U$ of $y$, there exists a neighborhood $V$ of $y$ such that $V\subseteq U$ and such that for all $0\leq k\leq n$, every map $f:S^k\to V$ extends to a map $g:D^{k+1}\to U$. We say $Y$ is $LC^n$ if $Y$ is $LC^n$ at all of its points.
\end{definition}

The following lemma was proved in dimension $n=1$ in \cite{BF}. For $n\geq 2$, the proof requires other techniques from shape theory.

\begin{lemma}\cite[Lemma 5.1]{AcetiBrazas}\label{technical}
Suppose $X$ is $LC^{n-1}$. Then for every $\scru\in\cov(X)$, there exists $\scrv\in \cov(X)$ such that for any canonical map $p_{\scrv}:X\to |N(\scrv)|$, we have $\ker (p_{\scrv\#}:\pi_n(X,x_0)\to \pi_n(N(\scrv),V_0))\leq \span(\scru,x_0)$.
\end{lemma}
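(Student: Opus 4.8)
The plan is to convert the combinatorial null-homotopy that $[\alpha]\in\ker(p_{\scrv\#})$ provides in the nerve into an honest factorization of $[\alpha]$ into path-conjugates of $\scru$-small $n$-spheres. Fix $\scru\in\cov(X)$ and let $\scrv$ (to be chosen) refine $\scru$. If $[\alpha]\in\ker(p_{\scrv\#})$, then $p_{\scrv}\circ\alpha\colon S^n\to|\nervev|$ extends to a map $F\colon D^{n+1}\to|\nervev|$, where $S^n=\partial D^{n+1}$. The strategy has three moves: (i) replace $F$ by a simplicial filling after subdivision; (ii) realize the $n$-skeleton of that filling back in $X$ using the $LC^{n-1}$ hypothesis, compatibly with $\alpha$ on the boundary; (iii) read off from the top-dimensional cells a decomposition of $[\alpha]$ as a product of generators of $\span(\scru,x_0)$.

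For the choice of $\scrv$ I would invoke the realization theorem for $LC^{n-1}$ spaces (Mardešić; see \cite{MS82}): since $X$ is $LC^{n-1}$, there is a cover $\scrv$ refining $\scru$, obtained by finitely many successive applications of local $(n-1)$-connectivity together with Lebesgue-number arguments, such that every partial realization relative to $\scrv$ of an at most $n$-dimensional simplicial complex extends to a full realization relative to $\scru$ (a realization carrying each simplex into a single member of the respective cover), and such that $\scrv$-close maps of complexes of dimension $\leq n$ into $X$ are $\scru$-homotopic. Crucially, this $\scrv$ depends only on $\scru$ and $X$ and not on the later choice of canonical map $p_{\scrv}$, which is exactly what the statement demands.

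Next, by simplicial approximation, after finely subdividing a triangulation $T$ of $D^{n+1}$, the map $F$ is homotopic to a simplicial map $G\colon(D^{n+1},T)\to\nervev$; in particular $G|_{S^n}$ is a simplicial approximation of $p_{\scrv}\circ\alpha$. For each vertex $v$ of $T$, the value $G(v)$ is a vertex of $\nervev$, i.e. a set $V_v\in\scrv$, and the simplicial condition forces $\bigcap_i V_{v_i}\neq\emptyset$ over the vertices of any simplex of $T$. Choosing a point of $X$ in each such intersection gives a partial realization of $T$ relative to $\scrv$ on the $0$-skeleton, which by the realization property of $\scrv$ extends to a map $\Phi\colon|T^{(n)}|\to X$ on the $n$-skeleton sending each $n$-simplex into a single member of $\scru$. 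The homotopy half of the realization theorem then shows $\Phi|_{S^n}$ is $\scru$-homotopic (after a based adjustment using path-connectivity) to $\alpha$, since both are $\scrv$-realizations of the boundary sphere carried by $G|_{S^n}$.

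Finally, I would extract the decomposition using the standard van-Kampen-style argument in its higher-dimensional form. Choosing a spanning tree in $T^{(1)}$ to select paths $\ell_\sigma$ in $\Phi(|T^{(1)}|)$ from $x_0$ to a basepoint on each top-dimensional $(n+1)$-simplex $\sigma$, the path-conjugate $\ell_\sigma\ast(\Phi|_{\partial\sigma})$ represents an element of $\pi_n(X,x_0)$ whose underlying $n$-sphere has image in a single $U\in\scru$, hence a generator of $\span(\scru,x_0)$. The cellular identity that the oriented boundary $\partial D^{n+1}$ equals the sum of the top-cell boundaries $\partial\sigma$ (interior faces cancel) translates, carried out $\pi_1$-equivariantly via path-conjugation, into $[\alpha]=\prod_{\sigma}[\ell_\sigma\ast(\Phi|_{\partial\sigma})]$ in $\pi_n(X,x_0)$; for $n\geq 2$ the group is abelian, so only the $\pi_1$-conjugations, not the ordering, need tracking. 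Since $\span(\scru,x_0)$ is a normal subgroup, the product lies in it, giving $[\alpha]\in\span(\scru,x_0)$. I expect step (ii), the based realization of the $n$-skeleton into $X$ simultaneously matching $\alpha$ on $S^n$ and keeping each $n$-simplex inside one $\scru$-set, to be the main obstacle: for $n=1$ it is the elementary path-connectivity filling of \cite{BF}, but for $n\geq 2$ it genuinely requires the shape-theoretic realization machinery, and sustained care is needed to keep the construction based and $\pi_1$-equivariant so that the concluding algebraic decomposition is valid.
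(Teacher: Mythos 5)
Note first that the paper does not prove this lemma: it is imported verbatim as \cite[Lemma 5.1]{AcetiBrazas}, so your attempt can only be judged against the standard argument (which is essentially the one you outline: simplicial filling in the nerve, realization back in $X$ via $LC^{n-1}$, then a cellular decomposition of $[\alpha]$ into Spanier generators). Your overall architecture is the right one, but two specific steps, as written, do not work.

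First, the factors you produce need not be generators of $\span(\scru,x_0)$. You extend the vertex-level partial realization to $\Phi\colon |T^{(n)}|\to X$ ``sending each $n$-simplex into a single member of $\scru$,'' but then declare that $\Phi|_{\partial\sigma}$, for $\sigma$ an $(n+1)$-simplex, has image in a single $U\in\scru$. That does not follow: $\partial\sigma$ is a union of $n+2$ faces lying in $n+2$ possibly different members of $\scru$, and no single vertex of $\sigma$ lies on all of them, so even a star-refinement does not immediately collapse them into one member. The repair is to invoke the realization theorem in its correct form for the \emph{$(n+1)$-dimensional} complex $T$ (not for the $n$-dimensional complex $T^{(n)}$): a partial $\scrv$-realization of $T$ on a subcomplex containing $T^{(0)}$ extends over $T^{(n)}$ so that for \emph{every} simplex $\tau$ of $T$ --- including the $(n+1)$-simplices --- the image of $|\tau|\cap|T^{(n)}|$ lies in a single member of $\scru$; applied to $\tau=\sigma$ this gives exactly the $\scru$-smallness of $\Phi(|\partial\sigma|)$. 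Without that version the final product is not visibly in $\span(\scru,x_0)$.

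Second, the claim that ``$\scrv$-close maps of complexes of dimension $\leq n$ into $X$ are $\scru$-homotopic'' is false for $LC^{n-1}$ spaces and this step would fail. Building a homotopy between two maps of an $n$-complex means mapping the $(n+1)$-dimensional complex $|K|\times\ui$ into $X$, which requires $LC^{n}$. A concrete counterexample is $X=\bbe_n$ (which is $LC^{n-1}$ but not $LC^n$): the inclusion of the sphere $C_k$ and the constant map are uniformly close for large $k$ but are not homotopic. What is true, and what you actually need, is the weaker statement that $[\alpha]$ and $[\ell\ast\Phi|_{S^n}]$ agree \emph{modulo} $\span(\scru,x_0)$; this is proved by running your own cellular decomposition a second time on the $(n+1)$-cells of a triangulation of $S^n\times \ui$, whose boundary spheres are again $\scru$-small. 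Indeed, absorbing exactly this failure of near maps to be homotopic is the raison d'\^{e}tre of the Spanier group here. With those two corrections (and the $\pi_1$-equivariant bookkeeping you already flag), the argument goes through.
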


Corollary \ref{spanierfinerthenshape} and Lemma \ref{technical} now combine to give the following.

\begin{corollary}\label{spanvshape}
If $X$ is a $LC^{n-1}$ metrizable space, then the Spanier and shape topologies on $\pi_n(X,x_0)$ agree.
\end{corollary}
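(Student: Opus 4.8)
The plan is to prove the two topologies coincide by showing each refines the other, exploiting the fact that both are \emph{group} topologies whose neighborhood bases at the identity consist of normal subgroups of $\pinx$. For the Spanier topology, such a base is given by the Spanier groups $\span(\scru,x_0)$ as $\scru$ ranges over $\cov(X)$, while for the shape topology a base is given by the kernels $\ker(p_{\scru\#}\colon\pinx\to\pi_n(|N(\scru)|,U_0))$. Since both topologies are invariant under translation, comparing them reduces entirely to comparing the cofinal structure of these two families of subgroups: one topology is at least as fine as the other precisely when every basic subgroup of the coarser family contains a basic subgroup of the finer family.

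The first direction is immediate and requires only metrizability. By Corollary \ref{spanierfinerthenshape}, the Spanier topology on $\pinx$ is at least as fine as the shape topology; concretely, for every $\scru\in\cov(X)$ the preceding proposition supplies a $\scrv\in\cov(X)$ with $\span(\scrv,x_0)\leq\ker(p_{\scru\#})$, so every shape-basic neighborhood of the identity contains a Spanier-basic one.

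For the reverse direction I would invoke the hypothesis that $X$ is $LC^{n-1}$. Lemma \ref{technical} asserts that for every $\scru\in\cov(X)$ there exists $\scrv\in\cov(X)$ such that, for any canonical map $p_{\scrv}$, the containment $\ker(p_{\scrv\#})\leq\span(\scru,x_0)$ holds. Translating this into the language of neighborhood bases: every Spanier-basic subgroup $\span(\scru,x_0)$ contains the shape-basic subgroup $\ker(p_{\scrv\#})$. Hence every basic neighborhood of the identity in the Spanier topology is a neighborhood of the identity in the shape topology, which shows the shape topology is at least as fine as the Spanier topology.

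Combining the two inclusions of neighborhood filters at the identity yields that the Spanier and shape topologies have the same neighborhoods at $e$, and by translation-invariance the same open sets, so the two topologies agree. I do not anticipate any genuine obstacle in this assembly: the substantive work lies entirely in Lemma \ref{technical}, which encodes the $LC^{n-1}$ hypothesis and which I am taking as given. The only point requiring care is the clean reduction of the comparison of two group topologies to the mutual cofinality of their defining families of normal subgroups, which is standard once one observes that both topologies are determined by their neighborhood bases at the identity.
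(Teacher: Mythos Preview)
Your proposal is correct and follows exactly the approach the paper indicates: the paper states (without writing out details) that Corollary~\ref{spanierfinerthenshape} and Lemma~\ref{technical} combine to give the result, and you have spelled out precisely this combination by comparing the two neighborhood bases of normal subgroups at the identity. There is nothing to add.
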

Next, we compare the Spanier topology and pseudometric topologies by comparing neighborhood bases at the identity element.

\begin{proposition}
Let $(X,d)$ be a path-connected metric space. Then for every $r>0$, there exists $\scru\in\cov(X)$ such that $\span(\scru,x_0)\leq B_{\rho}(e,r)$.
\end{proposition}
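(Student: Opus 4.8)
The plan is to use that $B_{\rho}(e,r)$ is a subgroup of $\pi_n(X,x_0)$, as established in Proposition \ref{normalsubgroups}. Since $\span(\scru,x_0)$ is, by definition, generated by the path-conjugates $[\gamma\ast\alpha]$ with $\alpha$ supported in a member of $\scru$, it will suffice to choose $\scru$ so that each of these generators lies in $B_{\rho}(e,r)$; the subgroup they generate is then contained in the subgroup $B_{\rho}(e,r)$. For the given $r>0$, I would first invoke paracompactness of the metric space $X$ to produce a locally finite open cover $\scru$ whose members all have diameter less than $r$---for instance, a locally finite refinement of the cover by balls of radius $r/3$---and designate as $U_0$ a member containing $x_0$, so that $(\scru,U_0)\in\cov(X)$.

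Next I would estimate a single generator $[\gamma\ast\alpha]$, where $\gamma:(\ui,0)\to(X,x_0)$ is a path and $\alpha:(S^n,d_0)\to(X,\gamma(1))$ has image in some $U\in\scru$. The crucial move is to compare $\gamma\ast\alpha$ not with the global constant $c_{x_0}$ but with the path-conjugate $\gamma\ast c_{\gamma(1)}$ of the constant $n$-loop at $\gamma(1)$. As the change-of-basepoint isomorphism (see Proposition \ref{isomprop}) sends the identity to the identity, $\gamma\ast c_{\gamma(1)}$ is null-homotopic and therefore represents $e$. Using the fact that path-conjugation preserves the uniform metric, recorded in the proof of Proposition \ref{isomprop}, I obtain
\[
\mu(\gamma\ast\alpha,\gamma\ast c_{\gamma(1)})=\mu(\alpha,c_{\gamma(1)})=\sup_{s\in S^n}d(\alpha(s),\gamma(1)).
\]
Since both $\gamma(1)=\alpha(d_0)$ and the entire image of $\alpha$ lie in $U$, each term $d(\alpha(s),\gamma(1))$ is at most $\operatorname{diam}(U)<r$. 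Consequently $\rho([\gamma\ast\alpha],e)\leq\mu(\gamma\ast\alpha,\gamma\ast c_{\gamma(1)})<r$, so $[\gamma\ast\alpha]\in B_{\rho}(e,r)$, and since $B_{\rho}(e,r)$ is a subgroup containing every generator, $\span(\scru,x_0)\leq B_{\rho}(e,r)$.

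I expect the one genuinely delicate point to be the choice of comparison loop in the second paragraph. Comparing $\gamma\ast\alpha$ directly with $c_{x_0}$ is doomed, because the conjugating path $\gamma$ may wander arbitrarily far from $x_0$ and would contribute an uncontrolled amount to the uniform distance; no constraint on $\scru$ could repair this. Keeping the $\gamma$-portion identical on both sides and varying only the spherical portion is what lets the contribution of $\gamma$ cancel, via the isometry of Proposition \ref{isomprop}, leaving an error governed solely by $\operatorname{diam}(U)$. The remaining ingredients---existence of a locally finite cover by small sets, and the reduction from generators to the full subgroup---are routine.
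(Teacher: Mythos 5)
Your proof is correct and follows essentially the same route as the paper's: cover $X$ by balls of small radius, compare the generator $\gamma\ast\alpha$ with the null-homotopic path-conjugate of the constant $n$-loop at $\gamma(1)$ using the isometry of path-conjugation, and conclude via the fact that $B_{\rho}(e,r)$ is a subgroup. Your extra care in taking a locally finite refinement (so that the cover genuinely lies in $\cov(X)$) and in bounding the supremum by $\operatorname{diam}(U)$ is a minor tightening of the same argument.
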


\begin{proof}
Given $r>0$, let $\scru=\{B_d(x,r/2)\mid x\in X\}$ be the cover of $X$ by $r/2$-balls. Consider a generator $[\alpha\ast f]\in \span(\scru,x_0)$ where $\alpha:(\ui,0)\to (X,x_0)$ is a path and $f:(S^n,d_0)\to (X,\alpha(1))$ is a map with image in $B_d(x,r/2)$ for some $x\in X$. Let $g:S^n\to X$ be the constant map at $\alpha(1)$. Since $e=[\alpha\ast g]$ and $\mu(\alpha\ast f,\alpha\ast g)<r$, we have $\rho([\alpha\ast f],e)<r$, giving $[\alpha\ast f]\in B_{\rho}(e,r)$.
\end{proof}

\begin{corollary}\label{spanvmetric}
For a metric space $(X,d)$, the Spanier topology on $\pi_n(X,x_0)$ is at least as fine as the pseudometric topology induced by $d$.
\end{corollary}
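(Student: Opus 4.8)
The plan is to reduce the comparison of the two topologies to a comparison of their neighborhood bases at the identity element $e$, exploiting the fact that both are group topologies on $\pi_n(X,x_0)$. By Proposition \ref{normalsubgroups}, the pseudometric topology makes $\pinmet(X,x_0)$ a topological group whose open normal subgroups $\{B_{\rho}(e,r)\}_{r>0}$ form a neighborhood base at $e$. Likewise, by the construction in Definition \ref{defSpaniertopology}, the Spanier topology is generated by cosets of the normal subgroups $\span(\scru,x_0)$, $\scru\in\cov(X)$, so these Spanier groups constitute a neighborhood base at $e$ in the Spanier topology.

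For two group topologies on the same group, one is at least as fine as the other precisely when every neighborhood of $e$ in the coarser topology contains a neighborhood of $e$ in the finer topology; equivalently, the identity homomorphism from the finer to the coarser topology is continuous at $e$, and hence everywhere by translation. Thus to prove the corollary I would show that for every $r>0$ there exists $\scru\in\cov(X)$ with $\span(\scru,x_0)\subseteq B_{\rho}(e,r)$. But this inclusion is exactly the content of the preceding proposition, so the conclusion is immediate: each basic pseudometric neighborhood $B_{\rho}(e,r)$ of $e$ contains the Spanier neighborhood $\span(\scru,x_0)$, whence the Spanier topology is at least as fine as the pseudometric topology.

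Since the substantive estimate has already been carried out in the preparatory proposition, no genuine obstacle remains for the corollary itself. The only points requiring care are the two standard observations I invoke above, namely that continuity of a homomorphism between topological groups need only be verified at the identity, and that the families $\{B_{\rho}(e,r)\}_{r>0}$ and $\{\span(\scru,x_0)\}_{\scru\in\cov(X)}$ genuinely serve as neighborhood bases at $e$ for their respective topologies.
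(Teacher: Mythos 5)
Your proposal is correct and matches the paper's route exactly: the paper also reduces the comparison to neighborhood bases at the identity (both families consisting of normal subgroups) and derives the corollary immediately from the preceding proposition, which supplies the inclusion $\span(\scru,x_0)\leq B_{\rho}(e,r)$. The extra remarks you make about verifying continuity only at the identity are the standard justification the paper leaves implicit.
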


At this point, we have sufficient results to prove Case (1) of Theorem \ref{mainthm}. The following lemma addresses Case (2).
\begin{lemma}\label{nestedretractlemma}
If $(X,d)$ is homeomorphic to an inverse limit $\varprojlim_{j\in\bbn}(X_j,r_{j+1,j})$ of finite polyhedra $X_j$ where the bonding maps $r_{j+1,j}:X_{j+1}\to X_j$ are retractions, then for all $n\geq 1$, the shape topology on $\pi_n(X,x_0)$ is at least as fine as the pseudometric topology.
\end{lemma}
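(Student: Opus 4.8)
The plan is to prove the reverse of the inequality in Proposition \ref{metvshape}: for every $r>0$ I will produce a shape-open neighborhood of the identity contained in the ball $B_{\rho}(e,r)$. Since these balls form a neighborhood base at $e$ for the pseudometric topology and both topologies make $\pi_n(X,x_0)$ a topological group, this shows the shape topology is at least as fine. Two preliminary reductions streamline the argument. First, by Theorem \ref{independentthm} the pseudometric topology is independent of the metric, so I may equip $X\cong\varprojlim_j(X_j,r_{j+1,j})$ with the standard inverse-limit metric $d(x,y)=\sum_{j\geq 1}2^{-j}\min\{1,d_j(p_j x,p_j y)\}$, where $p_j:X\to X_j$ is the limit projection and $d_j$ a metric on the compact polyhedron $X_j$. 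Second, since change of basepoint along a path is an isometry for $\rho$ (Proposition \ref{isomprop}) and a homeomorphism for the shape topology, and $X$ is path-connected, the asserted comparison does not depend on the basepoint; I will therefore be free to place $x_0$ at a convenient point.

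The key geometric input is the retraction structure of the bonding maps. Identifying each $X_j$ with its image under the inclusion section of $r_{j+1,j}$, the nested inclusions $X_1\subseteq X_2\subseteq\cdots$ assemble, together with the bonding maps, into continuous sections $\iota_j:X_j\to X$ of $p_j$ (so $p_j\circ\iota_j=\mathrm{id}$), and I set $\sigma_j=\iota_j\circ p_j:X\to X$. Each $\sigma_j$ is a retraction of $X$ onto the copy $\iota_j(X_j)$, these copies increase with $j$ (indeed $\iota_j=\iota_{j+1}\circ s_j$), and by construction $\sigma_j(x)$ agrees with $x$ in its first $j$ coordinates; hence $d(\sigma_j(x),x)\leq 2^{-j}$ for all $x$, so $\sigma_j\to\mathrm{id}_X$ uniformly. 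Because $\iota_1(X_1)\subseteq\iota_j(X_j)$ and $\sigma_j$ fixes its image pointwise, every point of $\iota_1(X_1)$ is fixed by all $\sigma_j$; using the basepoint reduction I choose $x_0\in\iota_1(X_1)$, so that $\sigma_j(x_0)=x_0$ for every $j$.

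With these in hand the comparison is immediate. Since $(X_j,r_{j+1,j})$ together with the limit projections is an expansion of $X$ in the sense of shape theory \cite{MS82}, the shape topology has the decreasing sequence of subgroups $\ker\big((p_j)_{\#}:\pi_n(X,x_0)\to\pi_n(X_j,p_j(x_0))\big)$ as a neighborhood base at $e$, each being shape-open because $(p_j)_{\#}$ factors through $\Psi_n$ and each $\pi_n(X_j,p_j(x_0))$ is discrete (Lemma \ref{discretenesslemma}). Given $r>0$, choose $j$ with $2^{-j}<r$ and let $[f]\in\ker(p_j)_{\#}$. Then $p_j\circ f$ is null-homotopic rel basepoint in $X_j$, so $g:=\sigma_j\circ f=\iota_j\circ(p_j\circ f)$ is a based $n$-loop at $\sigma_j(x_0)=x_0$ that is null-homotopic in $X$; moreover $\mu(f,g)\leq\sup_{x\in X}d(x,\sigma_j(x))\leq 2^{-j}<r$. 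Hence $\rho([f],e)<r$, giving $\ker(p_j)_{\#}\subseteq B_{\rho}(e,r)$, which completes the argument.

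I expect the main obstacle to be the basepoint mismatch: the natural retraction $\sigma_j$ does not fix $x_0$, so $\sigma_j\circ f$ a priori represents a class in $\pi_n(X,\sigma_j(x_0))$ rather than $\pi_n(X,x_0)$, and naively correcting this by path-conjugation would disturb the uniform distance $\mu(f,g)$ on which the entire estimate depends. The resolution is to exploit the nesting $\iota_1(X_1)\subseteq\iota_j(X_j)$ of the fixed-point sets to relocate the basepoint into $\iota_1(X_1)$ at the outset, making $\sigma_j$ basepoint-preserving for all $j$ simultaneously. The remaining points requiring care — that $\{X_j\}$ with the limit maps genuinely computes the shape topology through the kernels $\ker(p_j)_{\#}$, and that the chosen metric realizes $\sup_{x}d(x,\sigma_j(x))\leq 2^{-j}$ — are standard but should be verified explicitly.
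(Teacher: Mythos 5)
Your proposal is correct and follows essentially the same route as the paper's proof: both use Theorem \ref{independentthm} to pass to the standard product metric, realize the $X_j$ as nested retracts of $X$ via sections of the bonding maps, take the kernels of the projection-induced homomorphisms as a shape-neighborhood base at $e$, and conclude from $d(x,\sigma_j(x))\leq 2^{-j}$ that such a kernel lies in $B_{\rho}(e,r)$. The only cosmetic difference is that you justify placing $x_0$ in $X_1$ via the basepoint-change isometry, while the paper simply picks $x_0\in X_1$ outright.
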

\begin{proof}
Identify $X=\varprojlim_{j\in\bbn}(X_j,r_{j+1,j})$ and let $r_j:X\to X_j$, $j\in\bbn$ denote the projection maps. Let $s_{j,j+1}:X_j\to X_{j+1}$ be a section to $r_{j+1,j}$. When $i>j$, $s_{j,i}=s_{i-1,i}\circ s_{i-2,i-1}\circ\cdots\circ s_{j,j+1}$ is a section to $r_{i,j}=r_{j,j+1}\circ\cdots \circ r_{i-2,i-2}\circ r_{i,i-1}$. For fixed all pairs $i,j\in \bbn$, let $t_{j,i}:X_j\to X_i$ be $r_{j,i}$ if $j> i$, $id_{X_j}$ if $i=j$, and $s_{j,i}$ if $j<i$. Then the maps $\{t_{j,i}\}_{i\in\bbn}$ induce a map $t_j:X_j\to X$ which is a section to $r_j$. Thus we may identify $X_1\subseteq X_2\subseteq X_3\subseteq \cdots$ as a nested sequence of closed subspaces of $X$ where $\overline{\bigcup_{j\in\bbn}X_j}=X$. In particular $X_k$ consists of the sequences $(x_j)_{j\in\bbn}\in X$ where $x_j=x_k$ for all $j\geq k$ and $r_k((x_j)_{j\in\bbn})=(x_1,x_2,\dots,x_{k-1},x_k,x_k,x_k,\dots)$ for all $(x_j)_{j\in\bbn}\in X$.

Pick basepoint $x_0\in X_1$ and fix $n\geq 1$. Let $\delta>0$ so that $B_{\rho}(e,\delta)$ is a basic neighborhood of the identity in $\pinmet(X,x_0)$. Since $X$ is compact, the sequence $\{X_j\}_{j\in\bbn}$ gives an $\bf{HPol_{\ast}}$-expansion of $X$ and therefore, the subgroups $H_j=\ker(r_{j\#}:\pi_n(X,x_0)\to \pi_n(X_j,x_0))$ form a neighborhood base at the identity in $\pinshape(X,x_0)$. To show that the shape topology is finer than (or equal to) the pseudometric topology it suffices to show that there exists $k\in\bbn$ such that $H_k\subseteq B_{\rho}(e,\delta)$.

First, we claim that there exists $k\in\bbn$ such that $d(x,r_k(x))<\frac{\delta}{2}$ for all $x\in X$. By Theorem \ref{independentthm}, we may alter the metric $d$ on $X$ without changing the topology of $\pinmet(X,x_0)$. Since $\pinshape(X,x_0)$ is a topological invariant, doing so will not change the topology of $\pinshape(X,x_0)$ either. In particular, let $d_j$ be a metric inducing the topology of $X_j$ that is bounded by $1$. For elements $x=(x_1,x_2,\dots)$ and $y=(y_1,y_2,\dots)$ of $\prod_{j=1}^{\infty}X_j$, the formula $d(x,y)=\sum_{j=1}^{\infty}\frac{d_j(x_j,y_j)}{2^j}$ defines a metric that induces the topology of $\prod_{j=1}^{\infty}X_j$ with the product topology \cite[4.2.2]{Eng89}. Since $X$ is topologized a subspace of $\prod_{j=1}^{\infty}X_j$, $d(x,y)$ restricts to a metric that induces the topology of $X$. Find $k\in\bbn$ such that $\sum_{j=k+1}^{\infty}\frac{1}{2^j}<\frac{\delta}{2}$. Then, for any $x=(x_1,x_2,\dots)\in X$, we have \[d(x,r_k(x))=\sum_{j=k+1}^{\infty}\frac{d_j(x_j,x_k)}{2^j}\leq \sum_{j=k+1}^{\infty}\frac{1}{2^j}<\frac{\delta}{2}.\] This proves the desired claim.

Finally, we check that $H_k\subseteq B_{\rho}(e,\delta)$. Given $a\in H_k$, find an $n$-loop $\alpha\in a$. Then $\beta=r_k\circ \alpha$ is null-homotopic in $X_k$ and since $X_k\subseteq X$, $\beta$ is null-homotopic in $X$. Moreover, our choice of $k$ ensures that $d(\alpha({\bf{t}}),\beta({\bf{t})})=d(\alpha({\bf{t}}),r_k(\alpha({\bf{t}})))<\frac{\delta}{2}$ for all ${\bf{t}}\in\ui^n$. Thus $\mu(\alpha,\beta)<\frac{\delta}{2}$. It follows that $\rho(a,e)<\delta$, giving $a\in B_{\rho}(e,\delta)$. The inclusion $H_J\subseteq B_{\rho}(e,\delta)$ follows.
\end{proof}
At this point, we have established all results needed to prove Theorem \ref{mainthm}.
\begin{proof}[Proof of Theorem \ref{mainthm}]
Suppose $(X,d)$ is a compact metric space. By \Cref{metvshape}, the pseudometric topology is at least as fine as the shape topology (note that this is where compactness is required). To prove equivalence of the two topologies in Case (1), we assume $X$ is $LC^{n-1}$. By Corollary \ref{spanvmetric}, the Spanier topology is at least as fine as the pseudometric topology. Thus, we have the following situation: 
\[ \text{shape topology}\,\subseteq\,\text{pseudometric topology}\,\subseteq\,\text{Spanier topology}.\]
Finally, Corollary \ref{spanvshape} implies that the Spanier topology agrees with the shape topology. Thus the pseudometric topology agrees with the shape topology. Equivalence in Case (2) follows directly from \Cref{nestedretractlemma}.
\end{proof}
\begin{remark}
The $LC^{n-1}$ condition in Case (1) of Theorem \ref{mainthm} implies the well-known $n$-movability condition from shape theory \cite[II\S 8.1, Theorem 6]{MS82}. Moreover, the hypothesis in Case (2) of Theorem \ref{mainthm} directly implies that $X$ is $n$-movable. However, Theorem \ref{mainthm} does not appear to generalize readily to $n$-movable spaces. Indeed, the main structure of interest is the homotopy group $\pi_n(X,x_0)$ itself, rather than an $\bf{HPol_{\ast}}$-expansion or the shape-type of $X$. In general, movability conditions allow one to lift maps inductively along an inverse system. However, for the standard shape category these lifts are only lifts up to homotopy. One must be able to obtain strict lifts in order to induce a map to the limit.
\end{remark}
Next, we consider an example where the equivalence of the shape topology and pseudometric topology allows one to characterize the pseudometric topology by appealing to the simpler shape-theoretic setting.
\begin{example}[Earring mapping tori]
Fix $n\geq 2$. The \textit{$n$-dimensional infinite earring space} is $\bbe_n=\bigcup_{k\in\bbn}C_k$ where $C_k\subseteq \bbr^{n+1}$ is the $n$-sphere of radius $1/k$ centered at $(1/k,0,0,\dots,0)$ \cite{EK00higher}. Sometimes this space is referred to as the \textit{Barratt-Milnor Spheres} \cite{BarrattMilnor}. It is known that $\bbe_n$ is $(n-1)$-connected, locally $(n-1)$-connected (and thus $LC^{n-1}$), and $\pi_n(\bbe_n,\bf{0})\cong \bbz^{\bbn}$ \cite{EK00higher}. Let $f:\bbe_n\to\bbe_n$ be the canonical based ``shift map" that maps $C_k$ homeomorphically to $C_{k+1}$ for all $k\geq 1$ and $T_f=\bbe_n\times\ui/\mathord{\sim}$, $(x,0)\sim (f(x),1)$ be the mapping torus of $f$ (See Figure \ref{fig1}). Then $T_f$ is an $n+1$-dimensional Peano continuum. Take the image of the origin to be the basepoint $x_0$. The group $\pi_n(T_f,x_0)$ is uncountable but its elements have a fairly simple characterization \cite[Example 5.3]{AcetiBrazas}. We use Theorem \ref{mainthm} to characterize the pseudometric topology on $\pi_n(T_f,x_0)$. Since $T_f$ is shape equivalent to the aspherical space $S^1$, the $n$-th shape homotopy group $\check{\pi}_n(T_f,x_0)$ is trivial. Thus $\pinshape(T_f,x_0)$ is an indiscrete group. Given a point $x\in T_f$, either $T_f$ is locally contractible at $x$ or $x$ admits a neighborhood that deformation retracts onto a subspace that is homeomorphic to $\bbe_n$. Since $\bbe^n$ is $LC^{n-1}$, it follows that $T_f$ is $LC^{n-1}$. It then follows from Case (1) of Theorem \ref{mainthm} that the topology of $\pinmet(T_f,x_0)$ agrees with the shape topology and is therefore indiscrete.
\end{example}

\begin{figure}[h]
\centering \includegraphics[height=2.3in]{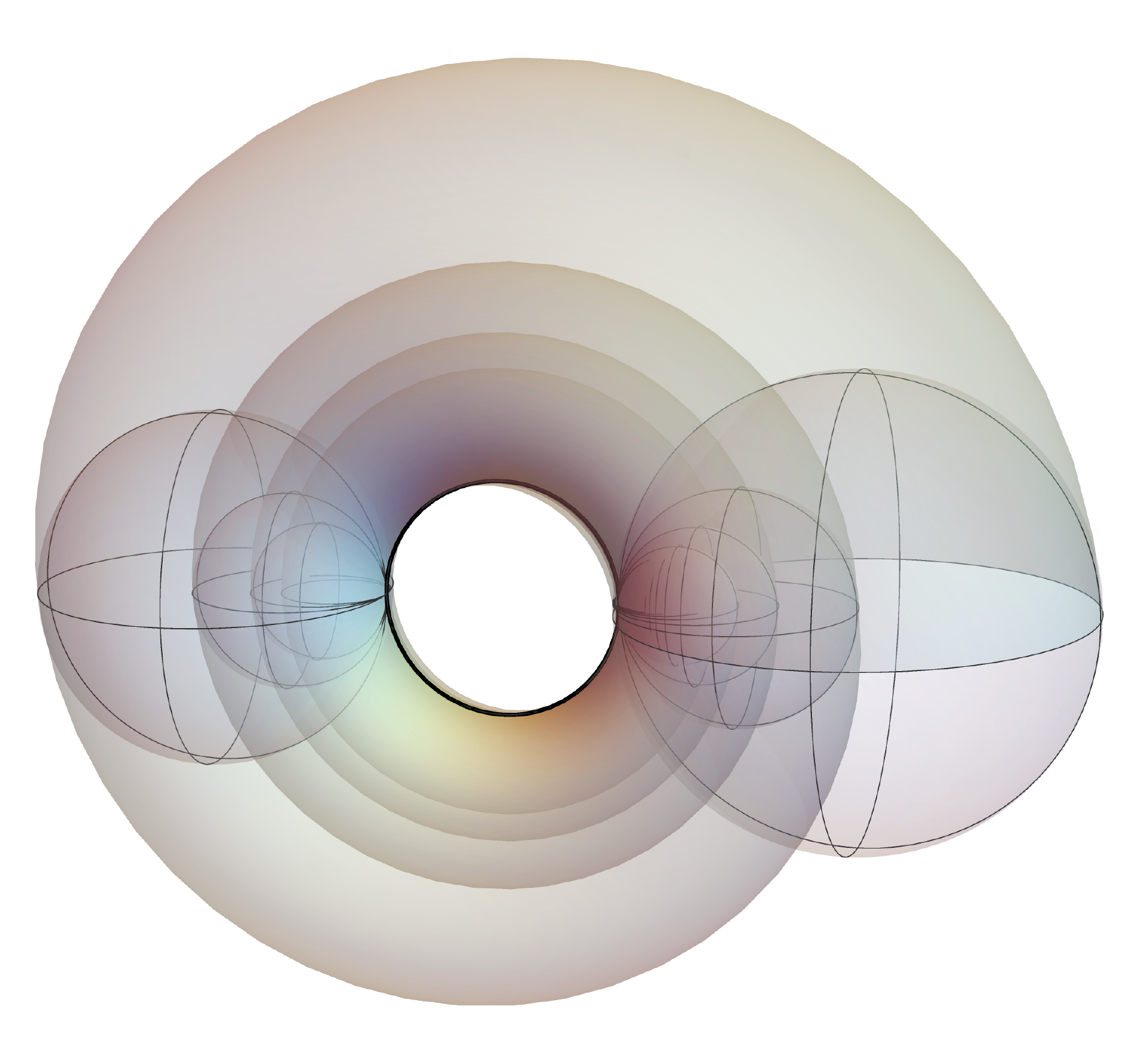}
\caption{\label{fig1}The mapping torus of the shift map $f:\bbe_2\to\bbe_2$ of the $2$-dimensional earring space.}
\end{figure}
We also consider an example showing that Theorem \ref{mainthm} fails to hold in any dimension $n\geq 1$ without the local path-connectedness hypothesis, i.e the assumption that $X$ is $LC^{0}$.
\begin{example}\label{cylinderexample}
Fix $n\geq 1$. Let $A_0=\{0\}\times [-1,1]\subseteq \bbr^2$ and $A_1=\{(x,\sin(1/x))\in\bbr^2\mid 0<x\leq 1/\pi\}$ so that $A=A_0\cup A_1$ is the closed topologists sine curve. Let $a_0=(0,1)\in A_0$ and $a_1=(1/\pi,0)\in A_1$. Consider the quotient space $B=A\times S^n/\{a_1\}\times S^n$ with quotient map $q:A\times S^n\to B$ (See Figure \ref{fig2}) and let $C=q(A_0\times S^n)$ be the cylinder over $S^n$. Set $x_0=q(a_0,d_0)$ and $x_1=q(\{a_1\}\times S^n)$. For each $a\in A$, let $\ell_a:S^n\to B$ be the map $\ell_a(t)=q(a,t)$, which is an embedding when $a\neq a_1$ and constant at $x_1$ if $a=a_1$. Note that $\ell_a$ is null-homotopic if and only if $a\in A_1$. Construct the space $X$ by attaching an arc $\ui$ to $B$ by identifying $0\sim x_0$ and $1\sim x_1$. We metrize $X$ with the canonical quotient metric. The space $X$ is a compact metric space, which may be embedded in $\bbr^{n+2}$. However, $X$ is not $LC^0$ at any point in $C$. Note that $\pi_n(X,x_0)$ is infinite cyclic generated by $[\ell_{a_0}]$. However, $X$ is shape equivalent to the aspherical space $S^1$ and the $n$-th shape homomorphism $\Psi:\pi_n(X,x_0)\to \check{\pi}_n(X,x_0)$ is the trivial homomorphism for all $n\geq 1$ (the domain and codomain are both $\bbz$ in the case $n=1$). Therefore, $\pinshape(X,x_0)$ is indiscrete. 

On the other hand, we check that $\pinmet(X,x_0)$ is discrete. Let $Y$ be the union of the cylinder $C$ and the added arc. then $Y$ is a compact polyhedron homotopy equivalent to $S^n$ and so $\pinmet(Y,x_0)$ is discrete. Therefore, there exists an $\epsilon>0$ such that if a based map $f:S^n\to Y$ is $\epsilon$-close to some null-homotopic based map (in the uniform metric), then $f$ is null-homotopic. Choose such an $\epsilon$ which is also less than one third the diameter of the attached arc. Thus an $\epsilon$-neighborhood of $x_1$ (the endpoint of the arc) is disjoint from $C$. Now suppose $f:S^n\to X$ is a null-homotopic based map and that $g:S^n\to X$ is $\epsilon$-close to $f$. Define $f':S^n\to X$ to agree with $f$ on $f^{-1}(Y)$ and to map $f^{-1}(X\backslash Y)$ to $x_1$. We define $g'$ similarly for $g$. It's easy to see that $f\simeq f'$, $g\simeq g'$, and that $f',g'$ are $\epsilon$-close maps in $Y$. Therefore, $f'\simeq g'$ in $Y$. We conclude that $g$ is null-homotopic in $X$. This proves that $\pinmet(X,x_0)$ is discrete.
\end{example}

\begin{figure}[h]
\centering \includegraphics[height=2.3in]{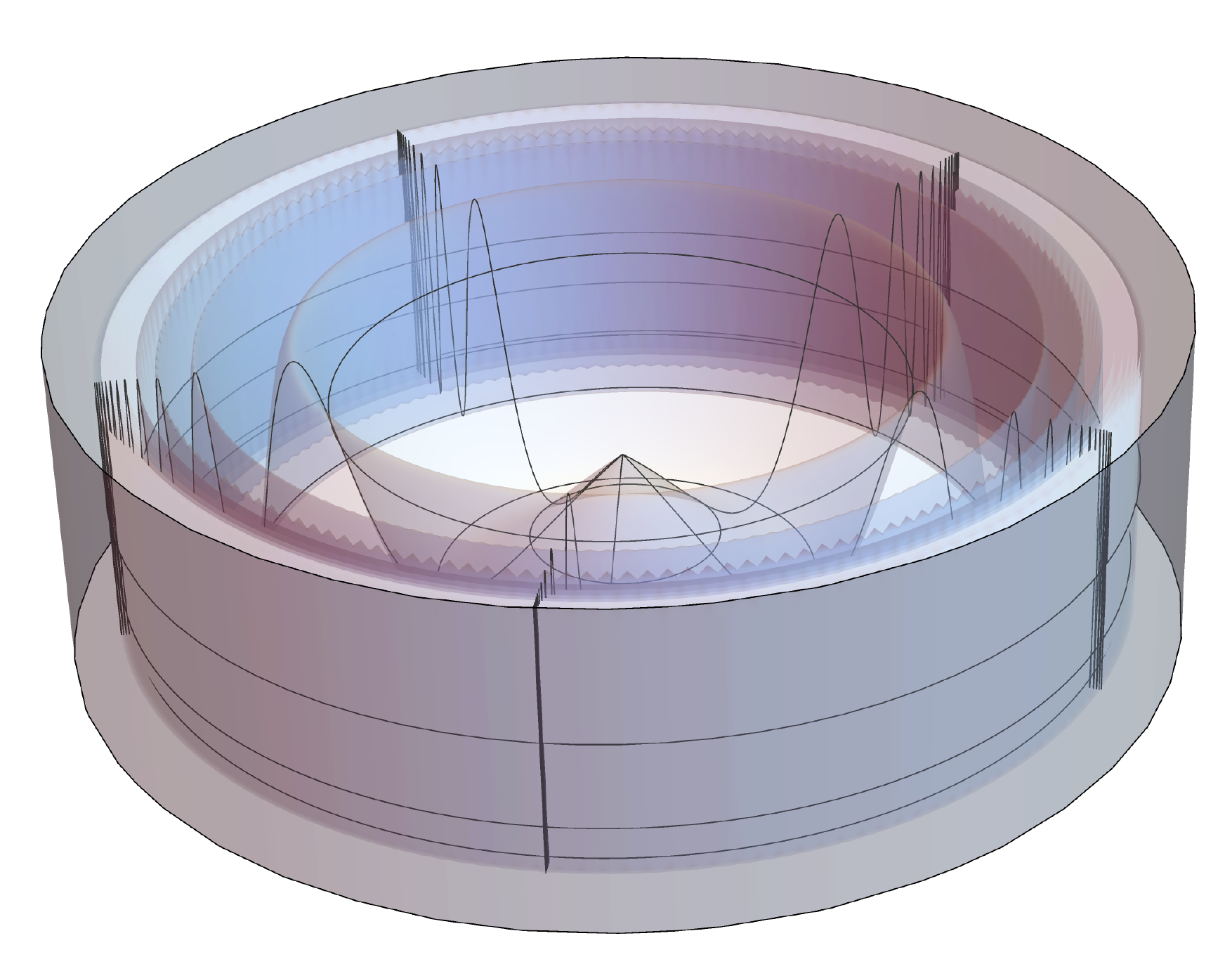}
\caption{\label{fig2}The space $B$ in the case $n=1$ is the union of a cylinder over $S^1$ and a non-compact surface that limits on the cylinder. The space $X$ is constructed by connecting the two path components with an arc.}
\end{figure}
In the next example, we illustrate an application of Case (2) of Theorem \ref{mainthm} as an important situation where Case (1) does not apply.
\begin{example}\label{shrinkingwedge}
Let $K_1,K_2,K_3,\cdots$ be a sequence of based, compact polyhedra and $X_k=\bigvee_{j=1}^{k}K_j$ for $k\in\bbn$. We take $r_{k+1,k}:X_{k+1}\to X_k$ to be the retraction which collapses $K_{k+1}$ to the basepoint and $X=\varprojlim_{k\in\bbn}(X_k,r_{k+1,k})$. Then $X=\widetilde{\bigvee}_{j\in\bbn}K_j$ is a Peano continuum, which we refer to as the \textit{shrinking wedge} of the sequence $\{K_j\}_{j\in\bbn}$. Since $X$ is an inverse limit of a nested sequence of polyhedral retracts, Case (2) of Theorem \ref{mainthm} implies that for any choice of metric $d$ on $X$, $\pinmet(X,x_0)=\pinshape(X,x_0)$ for all $n\geq 1$. For example, the $m$-dimensional earring space $\bbe_m$ is homeomorphic to $\widetilde{\bigvee}_{j\in\bbn}S^m$ and the conclusion $\pinmet(\bbe_m,x_0)=\pinshape(\bbe_m,x_0)$ only follows from Case (1) of Theorem \ref{mainthm} when $n\leq m$. Interestingly, this equivalence of topologies holds even when infinitely many $K_j$ fail to be $m$-connected, in which case $X$ is not $LC^{m}$ and the algebraic structure of $\pi_n(X,x_0)$ is complicated significantly by Whitehead products.
\end{example}
We conclude by considering the special case of the fundamental group where covering spaces and their generalizations are typically useful.

\begin{corollary}\label{maincor}
For a Peano continuum $X$ and $x_0\in X$, the following are equivalent:
\begin{enumerate}
\item $\rho$ is a metric on $\pi_1(X,x)$,
\item $X$ is $\pi_1$-shape injective,
\item for every $[\alpha]\in \pi_1(X,x)$, there exists a covering map $p:(E,e)\to (X,x_0)$ such that $[\alpha]\notin p_{\#}(\pi_1(E,e))$, i.e $\alpha$ lifts to a non-loop in $E$.
\end{enumerate}
\end{corollary}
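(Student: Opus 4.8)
The plan is to show that for a Peano continuum all three relevant topologies on $\pi_1(X,x_0)$---the pseudometric, shape, and Spanier topologies---coincide, and then to recognize each of (1), (2), (3) as the single assertion that this common topology is Hausdorff.

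First I would record that a Peano continuum $X$ is compact, connected, and locally path-connected, hence $LC^{0}$, i.e. $LC^{n-1}$ for $n=1$. Thus Case (1) of Theorem \ref{mainthm} applies and the pseudometric topology on $\pi_1(X,x_0)$ coincides with the shape topology, while Corollary \ref{spanvshape} gives that the Spanier topology also coincides with the shape topology. Now (1) says precisely that $\piomet(X,x_0)$ is Hausdorff, since $\rho$ is a metric iff $\rho(a,e)=0\Rightarrow a=e$ iff $\overline{\{e\}}=\{e\}$; and, as noted after the definition of $\pi_1$-shape injectivity, (2) says precisely that $\pishapex$ is Hausdorff. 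Since the pseudometric and shape topologies agree, the equivalence (1) $\Leftrightarrow$ (2) is immediate.

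Next I would handle (2) $\Leftrightarrow$ (3). Because the Spanier groups $\spu$, $\scru\in\cov(X)$, form a neighborhood base of open (hence closed) subgroups at the identity of the Spanier topology, the closure of $\{e\}$ in that topology equals $\bigcap_{\scru\in\cov(X)}\spu$; consequently the Spanier topology is Hausdorff iff this intersection is trivial. Combining with Corollary \ref{spanvshape}, condition (2) is equivalent to $\bigcap_{\scru\in\cov(X)}\spu=\{e\}$. It then remains to translate this into the covering-space condition (3), for which I would invoke classical Spanier covering space theory \cite{Spanier66}, applicable because $X$ is locally path-connected: (i) for every open cover $\scru$ there is a covering $p_{\scru}:(E,e)\to(X,x_0)$ whose characteristic subgroup $p_{\scru\#}(\pi_1(E,e))$ is exactly $\spu$; and (ii) for every covering $p:(E,e)\to(X,x_0)$, taking $\scru$ to be the open cover of $X$ by evenly covered sets yields $\spu\leq p_{\#}(\pi_1(E,e))$, so every covering subgroup contains some Spanier group.

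Finally I would assemble the chain. Assuming $\bigcap_{\scru}\spu=\{e\}$: given $[\alpha]\neq e$ there is $\scru$ with $[\alpha]\notin\spu$, and the covering from (i) then misses $[\alpha]$, so (3) holds. Conversely, assuming (3): given $[\alpha]\neq e$, choose a covering $p$ with $[\alpha]\notin p_{\#}(\pi_1(E,e))$; by (ii) there is $\scru$ with $\spu\leq p_{\#}(\pi_1(E,e))$, whence $[\alpha]\notin\spu$ and so $[\alpha]\notin\bigcap_{\scru}\spu$, giving triviality of the intersection and hence (2). (Since $e$ lies in every covering subgroup, it is understood that (3) is a statement about nontrivial classes.) The main obstacle is exactly the covering-space correspondence of facts (i) and (ii): the comparisons of topologies are soft and rest on the results already proved, but (i)--(ii) require local path-connectedness together with the classical identification of Spanier groups as characteristic subgroups of coverings.
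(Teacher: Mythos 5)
Your argument is correct, and for the equivalence $(1)\Leftrightarrow(2)$ it is essentially the paper's: both reduce to the observation that $\rho$ is a metric iff $\pi_{1}^{\met}(X,x_0)$ is Hausdorff, that $\pishapex$ is Hausdorff iff $\Psi_1$ is injective, and that the two topologies coincide by Case (1) of Theorem \ref{mainthm} since a Peano continuum is $LC^0$. Where you genuinely diverge is $(2)\Leftrightarrow(3)$: the paper simply cites \cite{BF} for this equivalence, whereas you derive it inside the framework of the paper by identifying the closure of $\{e\}$ in the Spanier topology with $\bigcap_{\scru}\spu$, invoking Corollary \ref{spanvshape} to transfer this to the shape topology, and then using the classical Spanier correspondence between covers $\scru$ and coverings with characteristic subgroup $\spu$ (together with the fact that the cover by evenly covered sets gives $\spu\leq p_{\#}(\pi_1(E,e))$ for any covering $p$). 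This buys a more self-contained proof whose only external input is Spanier's covering-space theory for connected, locally path-connected spaces, at the cost of re-proving a result the authors prefer to quote; the two facts (i) and (ii) you isolate are exactly the content needed and both hold for Peano continua. Your parenthetical caveat that $(3)$ must be read as a statement about nontrivial classes (since $e$ lies in every characteristic subgroup) is a fair observation about the statement itself and does not affect the argument.
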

\begin{proof}
The equivalence (2) $\Leftrightarrow$ (3) was proved in \cite{BF}. (1) $\Rightarrow$ (2) if $\rho$ is a metric, then $\pi_{1}^{\met}(X,x_0)$ is Hausdorff. Since $\pi_{1}^{\met}(X,x_0)= \pishapex$ by Theorem \ref{mainthm}, $\pishapex$ is Hausdorff and it follows that $\Psi_1:\pi_{1}(X,x_0)\to\check{\pi}_{1}(X,x_0)$ is injective. (2) $\Rightarrow$ (1) If $\Psi_1:\pi_{1}(X,x_0)\to\check{\pi}_{1}(X,x_0)$ is injective, then $\pi_{1}^{\met}(X,x_0)=\pi_{1}^{\text{sh}}(X,x_0)$ is Hausdorff. Since the topology of $\pi_{1}^{\met}(X,x_0)$ is generated by the pseudometric $\rho$, $\rho$ is a metric. 
\end{proof}

\end{document}